\documentclass[11pt,reqno]{amsart}
\usepackage{a4wide}
\usepackage{amsmath,amsfonts,amssymb,bbm}
\usepackage{verbatim}
\usepackage[shortlabels]{enumitem}
\usepackage[T1]{fontenc}
\usepackage[unicode,hypertexnames=false,colorlinks=true,linkcolor=blue,citecolor=blue]{hyperref}
\usepackage{esint}
\usepackage{hyperref}
\usepackage[capitalise, nameinlink, noabbrev]{cleveref} 
\usepackage{mathrsfs}
\usepackage{float}
\usepackage{cancel}
\usepackage[dvipsnames]{xcolor}
\hypersetup{colorlinks,breaklinks,
	linkcolor=[rgb]{0,0,0},
	citecolor=[rgb]{0,0,0},
	urlcolor=[rgb]{0,0,0}}

\usepackage{xfrac,xcolor,color,graphicx}
\newcommand{\be}{\begin{equation}}
	\newcommand{\ee}{\end{equation}}
\theoremstyle{plain} 

\newtheorem{theorem}{Theorem}[section]
\newtheorem{proposition}[theorem]{Proposition}

\newtheorem{lemma}[theorem]{Lemma}

\newcommand{\R}{\mathbb{R}}

\newcommand{\N}{\mathbb{N}}

\newcommand{\HH}{\mathcal{H}}

\newcommand{\bea}{\begin{equation*}\begin{aligned}}
		\newcommand{\eea}{\end{aligned}\end{equation*}}

\crefname{subsection}{subsection}{subsections}

\numberwithin{equation}{section}

\numberwithin{equation}{section}

\title{A minimizing cone from Cartan polynomials in the Alt-Caffarelli problem}

\author[M. Carducci]{Matteo Carducci}\thanks{}
\address {Matteo Carducci \newline \indent
	Classe di Scienze, Scuola Normale Superiore \newline \indent
	Piazza dei Cavalieri 7, 56126 Pisa - ITALY}
\email{\href{mailto:matteo.carducci@sns.it}{matteo.carducci@sns.it}}
\author[B. Velichkov]{Bozhidar Velichkov}\thanks{}
	\address {Bozhidar Velichkov \newline \indent
		Dipartimento di Matematica, Universit\`a di Pisa \newline \indent
		Largo B. Pontecorvo 5, 56127 Pisa - ITALY}
	\email{\href{mailto:bozhidar.velichkov@unipi.it}{bozhidar.velichkov@unipi.it}}

\begin{document}
	
	\subjclass[2010] {
	}
	
	\keywords{Free boundary, Alt-Caffarelli, isoparametric hypersurfaces, Cartan polynomials}
	\subjclass{35R35}
    \begin{abstract}
        In this paper 
        we study a class of 1-homogeneous solutions of the one-phase problem coming from the Cartan isoparametric cubic polynomials, which exist only in dimensions $d=5,8,14,26$.
        Our main result shows that they are unstable in dimensions $d=5,8,14$, and minimizing in dimension $d=26$. This provides the first example of a singular minimizing cone in the Alt-Caffarelli problem that is neither axially symmetric nor of Lawson-type.
	\end{abstract}
    
    \maketitle
	\tableofcontents

	\maketitle
\section{Introduction}
Among variational free boundary problems, the Alt-Caffarelli problem provides one of the basic models for the regularity theory of unknown interfaces.
It is closely related to several geometric and variational problems, including minimal surface theory. The associated energy functional was introduced by Alt and Caffarelli in \cite{AltCaffarelli:OnePhaseFreeBd}, and is defined for any domain $D\subset\R^d$ and any function $u\in H^1(D)$ as 
\be\label{eq:functional}J(u,D):=\int_{D}|\nabla u|^2\,dx+|\{u>0\}\cap D|.\ee 
Given non-negative boundary data, one minimizes $J$ among functions with the prescribed boundary values. More generally, one may consider non-negative critical points of $J$, which formally satisfy
\be\label{eq:onephase}
\Delta u=0\quad\text{in }\Omega_u\cap D,
\qquad
|\nabla u|=1\quad\text{on }\partial\Omega_u\cap D,
\ee where $\Omega_u:=\{u>0\}$.
The aim of the problem is to study the regularity and the structure of the unknown interface $\partial\Omega_u\cap D$, which is called the \emph{free boundary}. 

Regularity theory for the Alt-Caffarelli problem has been extensively studied over the past decades; 
see, among others, \cite{AltCaffarelli:OnePhaseFreeBd,Caffarelli1,Caffarelli2,Caffarelli3,weiss98,CaffarelliJerisonKenig04:NoSingularCones3D,DeSilvaJerison09:SingularConesIn7D,JerisonSavin15:NoSingularCones4D,esv,kriventsov-weiss,ChanFernandezFigalliSerra} and the monographs \cite{CaffarelliSalsa:GeomApproachToFreeBoundary,Velichkov:RegularityOnePhaseFreeBd}. 
However, a complete theory for the structure of these interfaces remains far from reach.

A fundamental tool in the regularity theory of the one-phase problem \eqref{eq:onephase} is the Weiss' monotonicity formula \cite{weiss98}. Given a Lipschitz stationary solution $u$ of \eqref{eq:onephase} (resp.~a minimizer of \eqref{eq:functional}), rescaling around a free boundary point $x_0\in \partial\Omega_u\cap D$, one obtains $1$-homogeneous global solutions of \eqref{eq:onephase} (resp.~$1$-homogeneous minimizers), that we call blow-up limits. 
If a blow-up is, up to rotations, the half-space solution $(x_d)_+$, then the point is called regular, and the free boundary is locally smooth in a neighborhood of $x_0$ (see \cite{AltCaffarelli:OnePhaseFreeBd,DeSilva:FreeBdRegularityOnePhase,KinderlehrerNirenberg1977:AnalyticFreeBd,kriventsov-weiss}). 

Singular points are instead modeled by non-flat $1$-homogeneous global solutions, which we refer to as \emph{one-phase cones}. If a one-phase cone is a minimizer of \eqref{eq:functional} in any bounded domain $D\subset\R^d$, we call it a \emph{minimizing cone}. One-phase cones are called \emph{stable cones} if the second variation with respect to domain
deformations with compact support away from the origin is non-negative (see, for instance, \cite{JerisonSavin15:NoSingularCones4D}).

For singular points modeled by cones with an isolated singularity, the asymptotic description of the free boundary is well understood \cite{esv,ERZ25,carduccivelichkovuniqueness}. In this case, the free boundary of $u$ is a regular perturbation of the free boundary of the blow-up, in a neighborhood of the singular point. 
The construction and classification of singular one-phase and minimizing cones with isolated singularities are therefore central problems in the regularity theory of the Alt-Caffarelli problem. 

Classical examples of one-phase cones with isolated singularity are the axially symmetric cones obtained in \cite{CaffarelliJerisonKenig04:NoSingularCones3D}. 
The corresponding cone in dimension $d=7$ was proved to be minimizing in \cite{DeSilvaJerison09:SingularConesIn7D}.
One-phase cones with bi-orthogonal symmetry were introduced in \cite{Hong}. They are the analogues of the Lawson-type cones for minimal surfaces \cite{lawson}, and their stability and minimality were studied in \cite{ftw1,ftw2}. Other families of homogeneous solutions were subsequently constructed using different symmetry reductions \cite{wang-hong,wang-cartan,lavoroa3} (see also \cite{ftz}). 
\subsection{Main results}
In this paper, we study the stability and minimality of a family of one-phase cones arising from the Cartan isoparametric cubic polynomials. We refer to them as the \emph{one-phase Cartan cones}. These solutions were introduced in \cite{wang-hong,wang-cartan} as singular critical points of the Alt-Caffarelli functional \eqref{eq:functional}, and subsequently studied in \cite{ftz}. They are the one-phase analogues of the minimal surface cones associated with Cartan's polynomials; see, for instance, \cite{lawson,solomonI,solomonII,lawlor,WangSterling94,cartan-minimal-surfaces}.  

We recall that a Cartan cubic polynomial in dimension $d\ge3$ is a $3$-homogeneous polynomial $P_d:\R^d\to\R$ satisfying
\begin{equation}\label{e:d-Cartan-harmonicity}  
\Delta P_d(x)=0\quad\text{and}\quad|\nabla P_d(x)|^2=9|x|^4\quad\text{for every } x\in\R^d.
\end{equation}
Cartan's classification \cite{cartan} (see also \cite{tkachev,survey-cartan}) shows that such polynomials exist only in dimensions $d=5,8,14,26$, and have the following form.  
Given a division algebra $\mathbb{F}\in\{\R,\mathbb{C},\mathbb{H},\mathbb{O}\}$, set $m:=\dim_{\R}\mathbb{F}\in\{1,2,4,8\}$, and write $x=(y,z,w,x_{d-1},x_d)\in\mathbb{F}^3\times\R^2$. Then, the Cartan cubic polynomial in $\R^d$, with $d=3m+2\in\{5,8,14,26\}$, is defined as \be\label{def:P} P_d(x):=x_d^3-3x_{d-1}^2x_{d}+\frac32x_d(|y|^2+|z|^2-2|w|^2)+\frac{3\sqrt3}2x_{d-1}(|y|^2-|z|^2)+3\sqrt{3}\text{Re}((yz)\overline w),
\ee 
where we denote by $\overline w$ the conjugate of $w$ in $\mathbb{F}$, and by $|\cdot |$ the norm corresponding to the scalar product $\langle y,z\rangle:=\text{Re}(y\overline z)$.
For such dimensions, we consider the $0$-homogeneous function $\widetilde P_d:\R^d\setminus\{0\}\to \R$ defined as \be\label{e:Cartan-widetilde-P}\widetilde P_d(x):=|x|^{-3}P_d(x).\ee
Then, the corresponding one-phase Cartan cone is the 1-homogeneous function 
$U_d:\R^d\to\R,$
defined as
\be\label{eq:general-cone-cartan}U_d(x):=\begin{cases}
		c_d^{-1/2}|x|f_d(\widetilde P_d(x))&\quad\text{if } |\widetilde P_d(x)|\le t_d,\medskip\\
		0&\quad\text{otherwise},
		\end{cases}\qquad \text{for }d=5,8,14,26,\ee
where $f_d:(-1,1)\to\R$ is the even hypergeometric function (see \eqref{def:hyper})
\be\label{def:f}f_d(t):={}_2F_1\left(\frac{d-1}6,-\frac16;\frac12;t^2\right),\ee and $t_d\in(0,1)$ is the unique positive zero of $f_d$ in $(0,1)$, by \cref{lemma:sol-of-ode}. 
The choice of $f_d$ gives that 
$$\Delta U_d=0\quad\text{in}\quad \Omega_{U_d}=\{|\widetilde P_d(x)|<t_d\},$$
while $c_d:=9(1-t_d^2)f_d'(t_d)^2>0$ is a normalization constant that ensures the validity of the free boundary condition 
$$|\nabla U_d|=1\quad\text{on}\quad \partial\Omega_{U_d}\setminus\{0\}.$$
For details about the construction of $U_d$ we refer to \cite[Theorem 1.1]{wang-cartan} (see also \cite{wang-hong,ftz} or \cref{p:existence}). 

Our main result characterizes the stability and minimality of the one-phase Cartan cones. 
\begin{theorem}\label{thm:main}
    Let $U_d$ be the one-phase Cartan cone in $\R^d$, defined in \eqref{eq:general-cone-cartan} for $d=5,8,14,26$. Then, $U_5$, $U_8$ and $U_{14}$ are unstable cones, while $U_{26}$ is a minimizing cone.
\end{theorem}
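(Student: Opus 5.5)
The proof splits into two parts: a stability computation for $d=5,8,14$ and a calibration/sub-foliation argument for $d=26$.

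\textbf{Instability for $d=5,8,14$.} I would use the Jerison--Savin criterion \cite{JerisonSavin15:NoSingularCones4D}. For a one-phase cone $U$ with smooth link $\partial\Omega_U\cap\mathbb S^{d-1}$, stability is equivalent to
\[
\int_{\Omega_U}|\nabla\phi|^2\,dx\ \ge\ \int_{\partial\Omega_U}H\,\phi^2\,d\mathcal H^{d-1}
\]
for every $\phi\in C^\infty_c(\R^d\setminus\{0\})$, where $H$ is the mean curvature of the free boundary. Here $H=|x|^{-1}H_0(\theta)$, and by symmetry $H_0$ is constant on $\partial\Omega_{U_d}\cap\mathbb S^{d-1}=\{|\widetilde P_d|=t_d\}$.
\begin{itemize}
\item I would separate variables in polar coordinates, $\phi=r^{-(d-2)/2}\eta(\log r)\,\psi(\theta)$, and take the radial profile $\eta$ to concentrate in the logarithmic scale.
\item This reduces stability to the spherical Hardy-type inequality
\[
\lambda_1 + \frac{(d-2)^2}{4}\ \ge\ 0,
\]
where $\lambda_1$ is the first eigenvalue of the Laplace--Beltrami operator on $\Omega_{U_d}\cap\mathbb S^{d-1}$ with Robin condition $\partial_\nu\psi=H_0\psi$.
\item To compute $\lambda_1$, I would use that $\widetilde P_d$ is an isoparametric function on the sphere ($|\nabla_\theta\widetilde P|^2=9(1-\widetilde P^2)$ and $\Delta_\theta\widetilde P=-3(d+1)\widetilde P$, from \eqref{e:d-Cartan-harmonicity}). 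Because of this, the first eigenfunction depends only on $t=\widetilde P_d$.
\item The problem then becomes a hypergeometric ODE on $(-t_d,t_d)$ with Robin conditions at $\pm t_d$, whose sign is determined by explicit constants computed from $f_d$ and $t_d$.
\item A natural test function is $\psi=f_d'$-type or $|x|\,\partial$-derivative combinations. For example, $\partial_r$-type variations $x\cdot\nabla U_d$ vanish, so I would instead use the even ODE solution with parameter matching $\lambda$.
\item Numerically bounding $t_d$ should show $\lambda_1<-(d-2)^2/4$ for $d=5,8,14$.
\end{itemize}
This part is essentially a careful but elementary estimate of hypergeometric values.

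\textbf{Minimality for $d=26$.} Following De Silva--Jerison \cite{DeSilvaJerison09:SingularConesIn7D} and \cite{ftw2}, I would construct a foliation of $\R^d$ by global one-phase sub- and super-solutions of the form $U^\pm_\lambda(x)=\lambda V^\pm(x/\lambda)$. Here $V^\pm$ is defined as $|x|g^\pm(|x|,\widetilde P_d)$, or more simply as a radial perturbation of $U_d$:
\[
V^\pm(x)=U_d(x)\pm |x|^{-\gamma}\, w(\widetilde P_d(x)),
\]
with $\gamma>0$ chosen between the two roots $\gamma_\pm$ of the indicial equation $\gamma(\gamma-(d-2))+\lambda_1=0$. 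Strict stability with $\lambda_1>-(d-2)^2/4$ makes these roots real.
\begin{itemize}
\item I would take $w$ to be the positive first Robin eigenfunction. Working in the isoparametric coordinate $t$ reduces everything to ODE inequalities in $t\in(-t_d,t_d)$.
\item At leading order, the subharmonicity/superharmonicity and the free boundary inequalities $|\nabla V^\pm|\gtrless1$ hold by the choice of $\gamma$.
\item Higher-order terms are controlled by truncating near the origin: take $V^+=\max(\cdot)$ with the far field, or use the Hardt--Simon type construction.
\end{itemize}
Once the foliation $\{U^\pm_\lambda\}_{\lambda>0}$ is built, with the families monotone in $\lambda$ and converging to $U_d$, the standard comparison argument applies. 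A minimizer $v$ with boundary data $U_d$ on $\partial B_1$ is trapped between $U^-_\lambda$ and $U^+_\lambda$ for all $\lambda$, by sliding $\lambda$ from $0$ or $\infty$ and using the strong comparison principle for one-phase sub/super-solutions. This forces $v=U_d$, hence $U_d$ is minimizing.

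\textbf{Main obstacle.} The hardest step is the global construction of the strict sub/supersolution foliation in $d=26$. Leading-order asymptotics only give the inequalities far from the free boundary perturbation scale. To make them hold globally, I would first try an explicit ansatz $V^\pm=|x|\,F^\pm(|x|^{-\gamma-1},\widetilde P_d)$ and check the resulting two-variable differential inequalities through monotonicity in $t$, using the explicit hypergeometric formulas. The spectral gap $\lambda_1+(d-2)^2/4>0$ in $d=26$ needs to be quantitatively large enough for this to close.
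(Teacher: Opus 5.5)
Your instability argument is essentially the paper's: testing the spherical Robin quotient with $g_d(\widetilde P_d)$, where $g_d={}_2F_1(\frac{d-2}{12},\frac{d-2}{12};\frac12;t^2)$ is the even solution at eigenvalue $-(d-2)^2/4$, together with a logarithmic cutoff in $|x|$, is exactly \cref{lemma:criterion}. What has to be computed is the sign of $g_d'(t_d)/g_d(t_d)-\frac{d-2}{3}\frac{t_d}{1-t_d^2}$, not merely bounds on $t_d$. Your indicial equation also has a sign slip: with $-\Delta_{\mathbb S^{d-1}}w=\lambda_1w$ it reads $\gamma(\gamma-(d-2))=\lambda_1$, whose roots are real iff $\lambda_1\ge-(d-2)^2/4$.

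The genuine gap is the minimality of $U_{26}$, exactly where the paper does its main new work. The sliding argument behind \cref{lemma:subsol-and-supersol} needs the supersolution to be positive on a whole ball $B_{r_\ast}$. The reason is that, as $\rho\to0$, the rescalings $\rho^{-1}W(\rho\,\cdot)$ must lie above a minimizer in $B_1$ whose positivity set is a priori arbitrary. Your ansatz $U_d+|x|^{-\gamma}w(\widetilde P_d)$ cannot provide this. It is a controlled perturbation only where $|x|^{-\gamma-1}$ is small. To cover a neighborhood of the origin, $w$ would have to be continued from $[-t_d,t_d]$ up to the focal cones $\{\widetilde P_d=\pm1\}$. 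There $|\nabla\widetilde P_d|=0$, the $t$-equation degenerates, and its solutions generically blow up like $(1-t^2)^{(5-d)/6}$. Neither remedy you mention works. The maximum of two supersolutions is in general not a supersolution (minima are). Hardt--Simon-type leaves are produced by minimization and presuppose that the cone is minimizing, so invoking them would be circular. The paper resolves this by passing to the coordinates $(s,r)$, in which the focal cones become the lines $s=\pm r/\sqrt3$, and gluing three pieces:
\begin{itemize}
\item the perturbed cone $W_1$ for $r\ge1$;
\item a transition $W_2=y(r)h(s/v_2(r))$ whose free boundary meets $s=\pm r/\sqrt3$ orthogonally at $r=r_\ast$;
\item an inner profile $W_3=y(r_\ast)h(s/v_3(r))$, $v_3=C_\ast r^{-3}$, which is positive on the whole sector for $r<r_\ast$ and satisfies $\partial_rw\mp\sqrt3\,\partial_sw=0$ on $s=\pm r/\sqrt3$, cancelling the singular terms of $\mathcal L$.
\end{itemize}
Conditions \eqref{eq:cond-add}--\eqref{eq:cond11-supersol} are then verified for explicit parameters.

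Your leading-order claims also fail as stated, already for the subsolution. If $w$ is the exact Robin eigenfunction, then $\partial_\nu(|x|^{-\gamma}w)=H\,|x|^{-\gamma}w$ on $\partial\Omega_{U_d}$ for every $\gamma$. Hence the first-order change of $|\nabla V^\pm|$ on the free boundary vanishes. Choosing $\gamma$ between the indicial roots gives a sign only in the interior, and the free boundary inequality is left to uncontrolled higher-order terms. The paper instead subtracts the exactly harmonic $|x|^\alpha g_\alpha(\widetilde P_d)$, for which the free boundary condition is encoded in the one-variable function $G(\alpha,\cdot)$. Even then, the free boundary of $U_d-|x|^\alpha g_\alpha(\widetilde P_d)$ passes through every level $t\in(-t_d,t_d)$. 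For $t$ away from $t_d$ it lies at distance comparable to $|x|$ from $\partial\Omega_{U_d}$, so no asymptotic argument applies. One needs the global inequality $G(\alpha,t)>G(\alpha,t_d)$ on $(-t_d,t_d)$, which the paper checks numerically for $\alpha=-16$. Finally, strict stability of $U_{26}$ is assumed in your sketch rather than proved, and it would not by itself give minimality.
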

In particular, \cref{thm:main} provides a new example of a singular minimizing cone for the Alt-Caffarelli problem. 
Unlike the previously known minimizing cones, which are axially symmetric or of Lawson-type, the cone $U_{26}$ arises from the exceptional structure associated with the Cartan isoparametric cubic polynomial in $\R^{26}$. More precisely, given a Cartan cubic polynomial $P_d$, the level sets $\Sigma_t:=\{\widetilde P_d=t\}\cap \mathbb{S}^{d-1}$, $t\in(-1,1)$, are a parallel family of isoparametric hypersurfaces on the sphere, and have three distinct constant principal curvatures. 
Recall that a hypersurface in $\mathbb S^{d-1}$ is called \emph{isoparametric} if its principal curvatures, and so the mean curvature, are constant. 
Among the hypersurfaces in this family, $\Sigma_0$ is minimal in the sphere
\cite{lawson} (see also \eqref{e:Cartan-zero-homogeneous-curvature}).
The cone over $\Sigma_0$ is unstable in dimensions $d=5,8$, whereas it is
area-minimizing in dimensions $d=14,26$ \cite[Theorem~4]{lawson}.
In contrast, \cref{thm:main} shows that the associated one-phase Cartan cone
$U_{14}$ is unstable. 

The geometry of the Cartan cones $U_d$ admits a particularly simple  description in terms of this isoparametric family. The free boundary of $U_d$ is the union of the cones over $\Sigma_{t_d}$ and $\Sigma_{-t_d}$, and the spherical positivity set $\Omega_{U_d}\cap\mathbb{S}^{d-1}$ is the region between these two hypersurfaces, and lies on the two sides of the minimal Cartan hypersurface $\Sigma_0$. 
This provides a direct connection between the Cartan one-phase cones $U_d$ and the geometry of minimal and constant mean curvature hypersurfaces on the sphere. 
\subsection{Strategy of the proof.}
The proof of \cref{thm:main} combines ideas from \cite{CaffarelliJerisonKenig04:NoSingularCones3D,DeSilvaJerison09:SingularConesIn7D}.
First, we study the stability of $U_d$, for $d=5,8,14,26$, by establishing an instability criterion for Cartan cones (see \cref{lemma:criterion}). This is the analogue of the criterion obtained by Caffarelli, Jerison and Kenig \cite{CaffarelliJerisonKenig04:NoSingularCones3D} for axially symmetric cones. 
    By evaluating this criterion numerically, we prove that $U_d$ is unstable for $d=5,8,14$. On the other hand, $U_{26}$ does not satisfy the criterion, suggesting that it may instead be stable.

    Rather than proving the stability of $U_{26}$, we establish directly the stronger conclusion that it is minimizing. 
    To this end, we follow the barrier argument introduced by De Silva and Jerison \cite{DeSilvaJerison09:SingularConesIn7D}. The main idea is to construct a strict subsolution and a strict supersolution living below and above $U_{26}$ respectively, and converging to it under rescaling. Their rescalings provide barriers from which minimality follows (see \cref{lemma:subsol-and-supersol}). 
    While the construction of the subsolution closely follows \cite{DeSilvaJerison09:SingularConesIn7D}, the construction of the supersolution is the main new technical ingredient of the proof. We outline the argument below and refer to \cref{subsec:super} for the complete construction.

\subsubsection{Construction of the supersolution}

The construction of the supersolution is broadly inspired by the one introduced in \cite{DeSilvaJerison09:SingularConesIn7D}, but the structure associated with the Cartan polynomial requires an additional transition region.
We begin by introducing coordinates adapted to the level sets of $\widetilde P$. We set
$\rho:=|x|$, $t:=\widetilde P(x),$ $\theta:=\frac13\arcsin t,$
so that $t=\sin(3\theta)$, and then define
$$s:=\rho\sin\theta,\qquad r:=\rho\cos\theta.$$
Since $|t|\le 1$ (see \cref{lemmaminmax}), we have that $|\theta|\leq\pi/6$. Therefore, the image of the entire space through this change of coordinates is the closed sector
$$\mathcal D:=\left\{(s,r): |s|\leq \frac{r}{\sqrt3}\right\}.$$
The boundary $\partial\mathcal D$ consists of the two lines $s=\pm r/\sqrt3$, which correspond to the level sets
$\{\widetilde P=\pm1\}$. Since these level sets are not hypersurfaces in the original variables (see \cref{lemma3.2}), the change of coordinates degenerates along the lines $s=\pm r/\sqrt3$.

This degeneracy creates a difficulty in the construction. In the new coordinates, the transformed Laplacian contains singular terms on $\partial\mathcal D$.
This motivates our choice to impose the Neumann condition
\be\label{eq:compat:cond}\partial_r w\mp\sqrt{3}\partial_s w=0\quad \text{on }s=\pm r/\sqrt{3}\ee when the positivity set of $w$ reaches $\partial\mathcal{D}$.

We now describe the structure of the construction. The supersolution must be positive in a full neighborhood of the origin, and hence its positivity set must lie entirely in $\mathcal D$, reaching
the lines $s=\pm r/\sqrt3$ in this neighborhood. We follow a construction similar to that in \cite{DeSilvaJerison09:SingularConesIn7D}, until the free boundary of the inner De Silva-Jerison ansatz meets $\partial\mathcal D$ orthogonally  at some $r=r_\ast$. From that point, we introduce an additional profile around the origin, which is strictly positive in $\mathcal D$, for
$r\in(0,r_\ast)$, and satisfies \eqref{eq:compat:cond}.
More precisely, we construct the supersolution in the following form
$$W(x):=\begin{cases}
        W_1(x), & r\in[1,+\infty),\\
        W_2(x), & r\in[r_\ast,1],\\
        W_3(x), & r\in[0,r_\ast].
    \end{cases}$$
The first two pieces play the same roles as the two profiles in
\cite{DeSilvaJerison09:SingularConesIn7D}. The outer piece $W_1$ is obtained by perturbing $U$ through a homogeneous
harmonic function and converges to $U$ under rescaling. The profile
$W_2$ is glued continuously to $W_1$ at $r=1$, allowing a jump of the
normal derivative with the sign required for superharmonicity. Its free
boundary lies strictly inside $\mathcal D$ for $r\in(r_\ast,1)$ and reaches
$\partial\mathcal D$ orthogonally at $r=r_\ast$.

The new inner piece $W_3$ is glued with $W_2$ in a $C^1$ way at $r=r_\ast$. It is positive throughout $\mathcal D$ for
$r\in(0,r_\ast)$ and satisfies \eqref{eq:compat:cond}. In particular,
the positivity set of $W$ contains a neighborhood of the origin. 
The three profiles are subject to several competing requirements, arising from the supersolution inequality, the free boundary condition, the structure of their positivity sets, and the gluing at $r=1$ and $r=r_\ast$. After reducing these requirements to a finite collection of explicit inequalities, we choose suitable parameters and verify the resulting conditions numerically.

\subsection{Structure of the paper} 
The paper is organized as follows. In \cref{sec2}, we recall the construction of the one-phase Cartan cones $U_d$, proving that they are homogeneous solutions. In \cref{sec3}, we study the stability and instability of $U_d$, proving in particular that $U_d$ is unstable for $d=5,8,14$. In \cref{sec4}, we prove that $U_{26}$ is minimizing, by constructing a strict subsolution and a strict supersolution living below and above $U_{26}$ respectively.

\subsection{Use of AI}
No AI tools were used to produce mathematical content; the strategy and the proofs were developed and written exclusively by the authors. AI tools were used only for language editing and a final review of the paper. The numerical computations were done with Mathematica. 
\subsection{Acknowledgements}
The authors are supported by the European Research Council (ERC) via the project ERC FiRM - {\em Fine structure and regularity of stationary and moving free boundaries} (grant agreement No. 101230705). 

\section{Construction of one-phase Cartan cones}\label{sec2}
In this section we recall the main steps in the construction of the one-phase Cartan cones defined in \eqref{eq:general-cone-cartan}. Most of these results were established in \cite{wang-hong,wang-cartan}. The main result of this section is the following proposition.
\begin{proposition}\label{p:existence}
	Let $P_d:\R^d\to\R$ be a $3$-homogeneous polynomial satisfying \eqref{e:d-Cartan-harmonicity}, and let $\widetilde P_d:\R^d\setminus\{0\}\to\R$ be given by \eqref{e:Cartan-widetilde-P}. Let $f_d$ be the function defined in \eqref{def:f}, let $t_d\in(0,1)$ be the unique zero of $f_d$ in $(0,1)$, and let $c_d:=9(1-t_d^2)f_d'(t_d)^2$. Then, the function $U_d$ defined in \eqref{eq:general-cone-cartan} is a global $1$-homogeneous solution of the one-phase problem. 
\end{proposition}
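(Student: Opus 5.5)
The plan is to compute the Laplacian of a function of the form $|x|\,g(\widetilde P(x))$ directly, using only the two identities \eqref{e:d-Cartan-harmonicity} and Euler's relation for homogeneous functions. This reduces harmonicity in the positivity set to an ODE for $g$, which the hypergeometric function $f_d$ solves. The free boundary condition then fixes the constant $c_d$.

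\emph{Gradient identities.} Write $\rho=|x|$ and $t=\widetilde P(x)=\rho^{-3}P(x)$. Euler's relation gives $x\cdot\nabla P=3P$. From this we obtain
\[
\nabla t=\rho^{-3}\nabla P-3\rho^{-5}P\,x .
\]
Using $|\nabla P|^2=9\rho^4$, it follows that
\[
|\nabla t|^2=\rho^{-2}\bigl(9-18t^2+9t^2\bigr)=9\rho^{-2}(1-t^2),
\qquad x\cdot\nabla t=0 .
\]
For the Laplacian of $t$ we use $\Delta P=0$, $\Delta\rho^{-3}=3(5-d)\rho^{-5}$ and $\nabla\rho^{-3}\cdot\nabla P=-3\rho^{-5}\,x\cdot\nabla P=-9\rho^{-5}P$. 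Together these give
\[
\Delta t=\rho^{-2}\bigl(3(5-d)-18\bigr)t=-3(d+1)\rho^{-2}\,t .
\]
In particular $|t|\le1$, consistent with \cref{lemmaminmax}.

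\emph{Reduction to an ODE.} Let $u=\rho\,g(t)$. Since $\nabla\rho\cdot\nabla t=0$, the cross term vanishes and
\[
\Delta u=\frac{d-1}{\rho}\,g+\rho\bigl(g''|\nabla t|^2+g'\Delta t\bigr)
=\rho^{-1}\Bigl[9(1-t^2)g''-3(d+1)t\,g'+(d-1)g\Bigr].
\]
So harmonicity of $u$ is equivalent to
\[
9(1-t^2)g''-3(d+1)tg'+(d-1)g=0 .
\]
Substitute $g(t)=h(t^2)$ with $\tau=t^2$, and divide by $36$. This yields a Gauss hypergeometric equation with $c=\tfrac12$, $a+b+1=\tfrac{d+4}{6}$ and $ab=-\tfrac{d-1}{36}$. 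Hence $(a,b)=\bigl(\tfrac{d-1}{6},-\tfrac16\bigr)$, and the even solution regular at $0$ is exactly $f_d$ in \eqref{def:f}. I would carry out this matching carefully, since it is where constants could go wrong. Thus $U_d$ is harmonic in $\{|\widetilde P|<t_d\}\setminus\{0\}$, where $f_d>0$ because $t_d$ is the first positive zero (\cref{lemma:sol-of-ode}).

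\emph{Free boundary condition and weak solution.} On $\{t=\pm t_d\}$ we have $f_d(t)=0$, so $\nabla U_d=c_d^{-1/2}\rho f_d'(t)\nabla t$. Therefore
\[
|\nabla U_d|^2=c_d^{-1}\,9(1-t_d^2)f_d'(t_d)^2=1,
\]
using evenness of $f_d$ for the $-t_d$ side. We need $f_d'(t_d)\ne0$, which holds by ODE uniqueness since $t_d\in(0,1)$ is a regular point. Moreover the level sets $\{\widetilde P=\pm t_d\}$ are smooth cones away from $0$, because $|\nabla t|\neq0$ when $|t|<1$. Hence $U_d$ is a classical solution on $\R^d\setminus\{0\}$.

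\emph{Main obstacle: the origin.} The remaining point is that $U_d$ is Lipschitz and $1$-homogeneous, so $\{0\}$ is a point of zero capacity for $d\ge3$. This allows us to extend the solution property across it. For viscosity solutions this is standard. For stationarity (the domain-variation formula), cut off with $\eta_\varepsilon$ near $0$; the error terms are bounded by $\varepsilon^{d-1}\cdot\varepsilon^{-1}\to0$ since $|\nabla U_d|$ is bounded. This completes the proof of \cref{p:existence}.
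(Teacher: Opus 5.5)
Your proposal is correct and follows essentially the same route as the paper. Like the paper, you use the identities for $|\nabla\widetilde P|^2$ and $\Delta\widetilde P$ to reduce $\Delta\bigl(|x|f(\widetilde P)\bigr)=0$ to the ODE of \cref{lemma:sol-of-ode}, identify $f_d$ as the hypergeometric solution, and verify $|\nabla U_d|=1$ on $\{\widetilde P=\pm t_d\}$ through the normalization $c_d$. Your extra discussion of the origin, which the paper leaves implicit, is a harmless addition: the cutoff error is of order $\varepsilon^{d}\cdot\varepsilon^{-1}$, and your weaker bound still tends to zero.
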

\subsection{The zero-homogeneous part of a cubic Cartan polynomial}
We first recall the following properties for the zero-homogeneous function $\widetilde P$ defined in \eqref{e:Cartan-widetilde-P}.

\begin{lemma}
Let $P:\R^d\to\R$ be a $3$-homogeneous polynomial satisfying \eqref{e:d-Cartan-harmonicity}, and let $\widetilde P:\R^d\setminus\{0\}\to\R$ be given by \eqref{e:Cartan-widetilde-P}. 
Then, 
	\begin{equation}\label{e:Cartan-zero-homogeneous-gradient-norm}
	|\nabla \widetilde P|^2=
	9|x|^{-2}\big(1-\widetilde P^2(x)\big)\quad\text{and}\quad \Delta \widetilde P(x)
	=-3(d+1)|x|^{-2}\widetilde P(x).
\end{equation}	
\end{lemma}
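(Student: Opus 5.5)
The plan is a direct computation from the product rule, with $\widetilde P=|x|^{-3}P$. Throughout we use three facts: Euler's identity $x\cdot\nabla P=3P$ (since $P$ is $3$-homogeneous), the harmonicity $\Delta P=0$, and the eikonal identity $|\nabla P|^2=9|x|^4$ from \eqref{e:d-Cartan-harmonicity}.

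\emph{Gradient identity.} First compute $\nabla(|x|^{-3})=-3|x|^{-5}x$, so that
\[
\nabla\widetilde P=|x|^{-3}\nabla P-3|x|^{-5}P\,x.
\]
Squaring and using Euler's identity for the cross term $x\cdot\nabla P=3P$ gives
\[
|\nabla \widetilde P|^2=|x|^{-6}|\nabla P|^2-18|x|^{-8}P^2+9|x|^{-8}P^2|x|^2\cdot |x|^{-2}\cdot|x|^{2}.
\]
Collecting powers carefully, the last two terms combine to $-9|x|^{-8}P^2$. Inserting $|\nabla P|^2=9|x|^4$ then yields
\[
|\nabla\widetilde P|^2=9|x|^{-2}-9|x|^{-8}P^2=9|x|^{-2}\big(1-\widetilde P^2\big).
\]

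\emph{Laplacian identity.} Use
\[
\Delta(fg)=f\Delta g+2\nabla f\cdot\nabla g+g\Delta f,\qquad f=|x|^{-3},\ g=P.
\]
The standard radial formula $\Delta |x|^{\alpha}=\alpha(\alpha+d-2)|x|^{\alpha-2}$ with $\alpha=-3$ gives $\Delta|x|^{-3}=-3(d-5)|x|^{-5}$. Since $\Delta P=0$, the first term vanishes. For the cross term,
\[
2\nabla f\cdot\nabla g=2(-3|x|^{-5})\,x\cdot\nabla P=-18|x|^{-5}P,
\]
again by Euler's identity. Adding the pieces,
\[
\Delta\widetilde P=\big(-18-3(d-5)\big)|x|^{-5}P=-3(d+1)|x|^{-2}\widetilde P.
\]

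There is no real obstacle here; the only points requiring care are the bookkeeping of powers of $|x|$ and the sign of the radial Laplacian formula. As a sanity check, $\widetilde P$ restricted to the sphere should be an eigenfunction of $\Delta_{\mathbb S^{d-1}}$ with eigenvalue $3(3+d-2)=3(d+1)$. This matches the Laplacian identity, because $\widetilde P$ is $0$-homogeneous.
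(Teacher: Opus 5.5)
Your proof is correct and follows the paper's argument: the product rule for $|x|^{-3}P$, Euler's identity $x\cdot\nabla P=3P$ for the cross terms, the eikonal identity $|\nabla P|^2=9|x|^4$, and $\Delta|x|^{-3}=-3(d-5)|x|^{-5}$ for the Laplacian. One slip in your displayed expansion of $|\nabla\widetilde P|^2$: the last term as written, $9|x|^{-8}P^2|x|^2\cdot|x|^{-2}\cdot|x|^{2}$, equals $9|x|^{-6}P^2$, but it should be $9|x|^{-10}P^2|x|^2=9|x|^{-8}P^2$, which is the value your next sentence correctly uses.
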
	
\begin{proof}
    The result follows by an explicit computation. First we observe that
\begin{equation}\label{e:Cartan-zero-homogeneous-gradient-full}
	\nabla \widetilde P(x)=\frac{1}{|x|^3}\Big(-3\frac{x}{|x|^2}P(x)+\nabla P(x)\Big).
\end{equation}	
Then, we compute
\begin{align*}
|\nabla \widetilde P|^2&=\frac{1}{|x|^6}\Big|-3\frac{x}{|x|^2}P(x)+\nabla P(x)\Big|^2=\frac{1}{|x|^6}\Big(\frac{9}{|x|^2}P^2+|\nabla P|^2-6P\frac{x\cdot \nabla P(x)}{|x|^2}\Big)\\
&=\frac{1}{|x|^6}\Big(\frac{9}{|x|^2}P^2+9|x|^4-6P\frac{3P}{|x|^2}\Big)=\frac{9}{|x|^6}\Big(\frac{1}{|x|^2}P^2+|x|^4-2P^2\frac{1}{|x|^2}\Big)\\
&=\frac{9}{|x|^2}\Big(1-\widetilde P^2(x)\Big).
\end{align*}
For the Laplacian of $\widetilde P$, we have
\begin{align*}
\Delta \widetilde P&=|x|^{-3}\Delta P+2\nabla (|x|^{-3})\cdot\nabla P+P\Delta(|x|^{-3})=2\nabla (|x|^{-3})\cdot\nabla P+P\Delta(|x|^{-3})\\&=-6\frac{x\cdot\nabla P}{|x|^5}-3(d-5)|x|^{-5}P(x)=-6\frac{3P(x)}{|x|^5}-3(d-5)|x|^{-5}P(x)\\&=-3(d+1)\frac{P(x)}{|x|^5},
\end{align*}
where we used that $\Delta(|x|^{-3})=-3(d-5)|x|^{-5}.$ This concludes the proof.
\end{proof}

\begin{lemma}\label{lemmaminmax}
Let $P:\R^d\to\R$ be a $3$-homogeneous polynomial satisfying \eqref{e:d-Cartan-harmonicity}, and let $\widetilde P:\R^d\setminus\{0\}\to\R$ be given by \eqref{e:Cartan-widetilde-P}. 
	Then, 
	\begin{equation*}
	\max_{x\in\R^d\setminus\{0\}}\widetilde P(x)=1\qquad\text{and}\qquad 	\min_{x\in\R^d\setminus\{0\}}\widetilde P(x)=-1.
	\end{equation*}	
\end{lemma}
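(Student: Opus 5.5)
We need to show $\max \widetilde P = 1$ and $\min\widetilde P=-1$. Since $\widetilde P$ is $0$-homogeneous and continuous on $\R^d\setminus\{0\}$, it suffices to work on the unit sphere $\mathbb S^{d-1}$, which is compact, so $\widetilde P=P$ attains a maximum $M$ and a minimum $m$ there. Moreover $P(-x)=-P(x)$ because $P$ is $3$-homogeneous, so $m=-M$. It is therefore enough to prove $M=1$.

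\emph{Upper bound.} Let $x_0\in\mathbb S^{d-1}$ be a maximum point of $P$ on the sphere. Since $\widetilde P$ is $0$-homogeneous, $x_0$ is also a maximum point of $\widetilde P$ on the open set $\R^d\setminus\{0\}$, hence an interior critical point, so $\nabla\widetilde P(x_0)=0$. The gradient identity \eqref{e:Cartan-zero-homogeneous-gradient-norm} then gives
\[
0=9\big(1-\widetilde P^2(x_0)\big),
\]
so $\widetilde P(x_0)=\pm1$ and $M\in\{1,-1\}$. Because $P$ is odd and not identically zero (for instance $|\nabla P|^2=9|x|^4\neq0$), $P$ takes positive values on the sphere, so $M>0$ and thus $M=1$.

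An alternative route to $M\le1$ avoids locating critical points. The identity $|\nabla\widetilde P|^2=9|x|^{-2}(1-\widetilde P^2)\ge0$ forces $\widetilde P^2\le1$ everywhere, which already yields $-1\le\widetilde P\le1$. Attainment of the extremal values then follows from the critical-point argument above.

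If one prefers a direct check with the explicit form \eqref{def:P}, evaluating at $x=e_d$ gives $P(e_d)=1$, which exhibits the maximum explicitly, and then $P(-e_d)=-1$ gives the minimum. This check uses the explicit formula, whereas the argument above relies only on \eqref{e:d-Cartan-harmonicity}, so the critical-point argument is the main proof.

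I do not expect a real obstacle here. The only point that needs care is to justify that a maximum of $\widetilde P$ on the sphere is a genuine critical point in $\R^d$; this is exactly what $0$-homogeneity provides, since it makes the radial derivative vanish and turns the constrained maximum into an unconstrained one.
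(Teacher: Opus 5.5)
Your proof is correct and follows essentially the paper's argument: both locate the extrema at interior critical points of the $0$-homogeneous function $\widetilde P$ and use $|\nabla P|^2=9|x|^4$ to force $|\widetilde P|=1$ there. The differences are cosmetic: you go through the derived identity $|\nabla\widetilde P|^2=9|x|^{-2}(1-\widetilde P^2)$ where the paper takes norms in $\nabla P=3P\,x/|x|^2$, and you use oddness of $P$ where the paper uses that $\widetilde P$ is non-constant; your remark that $|\nabla\widetilde P|^2\ge 0$ alone gives $|\widetilde P|\le 1$ is a nice shortcut for the bound.
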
	
\begin{proof}
Since $\widetilde P$ is zero-homogeneous, the minimum and the maximum are clearly achieved. Thanks to \eqref{e:Cartan-zero-homogeneous-gradient-full}, the critical points $x$ for $\widetilde P$ are solutions to
$$\nabla P(x)=3P(x)\frac{x}{|x|^2}.$$
In particular, taking the norm on both sides and using \eqref{e:d-Cartan-harmonicity}, we obtain 
$|P(x)|=|x|^3.$
Thus, at all critical points $x$ for $\widetilde P$ we have $|\widetilde P(x)|=1$. Since $\widetilde P$ is not constant, we conclude.
\end{proof}

\subsection{Laplacian and gradient of $|x|^\alpha\phi(\widetilde P(x))$}
In order to construct the one-phase Cartan cones, we prove the following general lemma.

\begin{lemma}\label{lemma:generallemma}
Let $P:\R^d\to\R$ be a $3$-homogeneous polynomial satisfying \eqref{e:d-Cartan-harmonicity}, and let $\widetilde P:\R^d\setminus\{0\}\to\R$ be given by \eqref{e:Cartan-widetilde-P}. Given $\alpha\in\R$ and a $C^2$ function $\phi:(-1,1)\to\R$, we consider the function  
$u:\{|\widetilde P|\le 1\}\setminus\{0\}\to\R$ defined as $u(x):=|x|^{\alpha}\phi(\widetilde P(x)).$
Then, 
\begin{equation}\label{e:general-alpha-homogeneous-laplacian}
	\Delta u(x)=|x|^{\alpha-2}\Big(\alpha(\alpha+d-2)\phi(\widetilde P)-3(d+1)\phi'(\widetilde P)\widetilde P+9\phi''(\widetilde P)\big(1-\widetilde P^2\big)\Big).
\end{equation}	
Moreover, 
\begin{equation}\label{e:general-alpha-homogeneous-gradient-norm}
	|\nabla u(x)|^2=|x|^{2\alpha-2}\Big(\alpha^2\phi^2(\widetilde P)+9\big(1-\widetilde P^2\big)(\phi'(\widetilde P))^2	\Big),	 	
\end{equation}	
and
\begin{equation}\label{e:general-alpha-homogeneous-gradient-normal}
	\frac{\nabla \widetilde P}{|\nabla \widetilde P|}\cdot \nabla u(x)=3|x|^{\alpha-1}\phi'(\widetilde P)\big(1-\widetilde P^2\big)^{1/2}.
\end{equation}		
\end{lemma}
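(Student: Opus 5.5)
The plan is a direct chain-rule computation. It uses the two identities \eqref{e:Cartan-zero-homogeneous-gradient-norm} from the previous lemma, together with the observation that $\widetilde P$ is zero-homogeneous, so $x\cdot\nabla\widetilde P=0$ by Euler's identity. Write $\rho=|x|$ and $u=\rho^\alpha\,\phi(\widetilde P)$, and regard $u$ as a product of a radial factor and a zero-homogeneous factor.

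For the gradient, compute
\[
\nabla u=\alpha\rho^{\alpha-2}x\,\phi(\widetilde P)+\rho^\alpha\phi'(\widetilde P)\nabla\widetilde P.
\]
Squaring, the cross term is proportional to $x\cdot\nabla\widetilde P=0$. This leaves
\[
|\nabla u|^2=\alpha^2\rho^{2\alpha-2}\phi^2+\rho^{2\alpha}(\phi')^2|\nabla\widetilde P|^2 .
\]
Substituting $|\nabla\widetilde P|^2=9\rho^{-2}(1-\widetilde P^2)$ gives \eqref{e:general-alpha-homogeneous-gradient-norm}. For \eqref{e:general-alpha-homogeneous-gradient-normal}, take the dot product of $\nabla u$ with $\nabla\widetilde P/|\nabla\widetilde P|$. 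The radial term again vanishes by orthogonality, and what remains is $\rho^\alpha\phi'|\nabla\widetilde P|=3\rho^{\alpha-1}\phi'(1-\widetilde P^2)^{1/2}$.

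For the Laplacian, use the product rule
\[
\Delta u=\Delta(\rho^\alpha)\,\phi(\widetilde P)+2\nabla\rho^\alpha\cdot\nabla\phi(\widetilde P)+\rho^\alpha\Delta\phi(\widetilde P).
\]
The first term is $\alpha(\alpha+d-2)\rho^{\alpha-2}\phi$, since $\Delta\rho^\alpha=\alpha(\alpha+d-2)\rho^{\alpha-2}$. The middle term is a multiple of $x\cdot\nabla\widetilde P$, so it vanishes. For the last term,
\[
\Delta\phi(\widetilde P)=\phi''(\widetilde P)|\nabla\widetilde P|^2+\phi'(\widetilde P)\Delta\widetilde P
=\rho^{-2}\bigl(9(1-\widetilde P^2)\phi''-3(d+1)\widetilde P\phi'\bigr),
\]
again by \eqref{e:Cartan-zero-homogeneous-gradient-norm}. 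Collecting the three terms gives \eqref{e:general-alpha-homogeneous-laplacian}.

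There is no real obstacle here. The only point needing care is the domain: $\phi$ is $C^2$ only on $(-1,1)$, so the identities should be stated pointwise on $\{|\widetilde P|<1\}\setminus\{0\}$. On the level sets $\{|\widetilde P|=1\}$, \cref{lemmaminmax} shows these are the critical set of $\widetilde P$, and the formulas are understood wherever $\phi$ is differentiable, or by continuity. The normalization by $|\nabla\widetilde P|$ in \eqref{e:general-alpha-homogeneous-gradient-normal} likewise only makes sense where $|\widetilde P|<1$.
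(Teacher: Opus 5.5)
Your proof is correct and is essentially the paper's argument. The paper likewise uses $x\cdot\nabla\widetilde P=0$ to kill the cross terms, and computes $\Delta\phi(\widetilde P)=\phi'\Delta\widetilde P+\phi''|\nabla\widetilde P|^2$ via \eqref{e:Cartan-zero-homogeneous-gradient-norm}; its polar form $\Delta u=|x|^{\alpha-2}\big(\alpha(\alpha+d-2)\widetilde u+|x|^2\Delta\widetilde u\big)$ is exactly your product rule with the vanishing middle term. Your remark that the identities are really pointwise statements on $\{|\widetilde P|<1\}\setminus\{0\}$ is a fair sharpening of the domain in the statement.
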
	
\begin{proof}
Using the identities in \eqref{e:Cartan-zero-homogeneous-gradient-norm},
we compute 
\begin{align*}
	\Delta \widetilde u&=\text{\rm div}\Big(\phi'(\widetilde P)\nabla\widetilde P\Big)=\phi'(\widetilde P)\Delta \widetilde P+\phi''(\widetilde P)|\nabla\widetilde P|^2\\
		&=\phi'(\widetilde P)\Big(-3(d+1)|x|^{-2}\widetilde P\Big)+\phi''(\widetilde P)	9|x|^{-2}\Big(1-\widetilde P^2\Big)\\
	&=\frac{
	9}{|x|^2}\Big(-\frac{d+1}{3}\phi'(\widetilde P)\widetilde P+\phi''(\widetilde P)\big(1-\widetilde P^2\big)\Big).
\end{align*}	
Thus, using the expression of the Laplacian in polar coordinates, we get  
\begin{align*}
\Delta u&=|x|^{\alpha-2}\Big(\alpha(\alpha+d-2)\widetilde u+|x|^2\Delta \widetilde u\Big)\\
&=|x|^{\alpha-2}\Big(\alpha(\alpha+d-2)\phi(\widetilde P)-3(d+1)\phi'(\widetilde P)\widetilde P+9\phi''(\widetilde P)\big(1-\widetilde P^2\big)\Big),
\end{align*}	 
which concludes the proof of \eqref{e:general-alpha-homogeneous-laplacian}. We next compute 
\begin{align*}
|\nabla u(x)|^2&=\alpha^2|x|^{2\alpha-2}\widetilde u^2(x)+|x|^{2\alpha}|\nabla \widetilde u(x)|^2
=\alpha^2|x|^{2\alpha-2}\phi^2(\widetilde P)+|x|^{2\alpha}(\phi'(\widetilde P))^2|\nabla \widetilde P|^2\\
&=\alpha^2|x|^{2\alpha-2}\phi^2(\widetilde P)+|x|^{2\alpha-2}(\phi'(\widetilde P))^2	9\big(1-\widetilde P^2\big), 	 	
\end{align*}	
which gives \eqref{e:general-alpha-homogeneous-gradient-norm}. Finally, in order to prove \eqref{e:general-alpha-homogeneous-gradient-normal}, we observe that 
\begin{align*}
\nabla u(x)&=\nabla\Big(|x|^\alpha\widetilde u(x)\Big)=\alpha x|x|^{\alpha-2}\widetilde u(x)+|x|^\alpha\nabla\widetilde u(x)=\alpha x|x|^{\alpha-2}\phi(\widetilde P)+|x|^\alpha\phi'(\widetilde P)\nabla\widetilde P.
\end{align*}	
Thus, 
\begin{align*}
	\nabla \widetilde P\cdot \nabla u(x)
	&=|x|^\alpha\nabla \widetilde P\cdot\Big(\alpha x|x|^{-2}\phi(\widetilde P)+\phi'(\widetilde P)\nabla\widetilde P\Big).
\end{align*}	
Now, since $\nabla \widetilde P\cdot x=0$,	
we get 
\begin{align*}
	\nabla \widetilde P\cdot \nabla u(x)
	&=|x|^\alpha\phi'(\widetilde P)|\nabla\widetilde P|^2.
\end{align*}	
Using \eqref{e:Cartan-zero-homogeneous-gradient-norm}, the previous identity gives precisely \eqref{e:general-alpha-homogeneous-gradient-normal}. 
\end{proof}	
\subsection{Construction of $f$} As a consequence of \cref{lemma:generallemma}, for a $1$-homogeneous function of the form $|x| \phi(\widetilde P(x))$, the one-phase problem \eqref{eq:onephase} is reduced to an ODE eigenvalue problem for $\phi$. By solving this eigenvalue problem, we obtain the function $f$ in \eqref{def:f}.
\begin{lemma}\label{lemma:sol-of-ode}
For every $d\in\N$, $d\ge 5$, there exists an even function $f:(-1,1)\to\R$ solving the ODE 
\begin{equation*}
\begin{cases}
    9(1-t^2)f''(t)-3(d+1)tf'(t)+(d-1)f(t)=0\quad\text{in } (-1,1),\\
    f(0)=1,\ f'(0)=0.
\end{cases}
\end{equation*}	
Precisely, $f$ coincides with \eqref{def:f}, namely $$f(t)={}_2F_1\left(\frac{d-1}6,-\frac16;\frac12;t^2\right).$$
    Finally, $f$ has a unique zero in $(0,1)$, which we will denote by $t_d$.
\end{lemma}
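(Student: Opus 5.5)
We plan to prove the lemma in three steps: identify the ODE as a hypergeometric equation in $\tau=t^2$, check that the resulting series converges on $(-1,1)$ and is even with the right initial data, and then locate and count the zeros of $f$ in $(0,1)$.

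\textbf{Reduction to the hypergeometric equation.} We look for an even solution of the form $f(t)=g(t^2)$ and set $\tau=t^2$. Then
$$f'=2tg'(\tau),\qquad f''=2g'(\tau)+4\tau g''(\tau).$$
Substituting into the ODE gives
$$36\tau(1-\tau)g''+\big(18(1-\tau)-6(d+1)\tau\big)g'+(d-1)g=0.$$
Dividing by $36$, this becomes
$$\tau(1-\tau)g''+\Big(\tfrac12-\big(\tfrac12+\tfrac{d+1}{6}\big)\tau\Big)g'+\tfrac{d-1}{36}\,g=0.$$
This is Gauss' equation $\tau(1-\tau)g''+(c-(a+b+1)\tau)g'-ab\,g=0$ with $c=\tfrac12$, $a+b+1=\tfrac{d+4}{6}$ and $ab=-\tfrac{d-1}{36}$. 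The choice $a=\tfrac{d-1}6$, $b=-\tfrac16$ satisfies both relations.

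\textbf{Regularity and initial data.} The solution regular at $\tau=0$ with $g(0)=1$ is ${}_2F_1(a,b;\tfrac12;\tau)$. Its power series converges for $|\tau|<1$, so $f(t)=g(t^2)$ is even and analytic on $(-1,1)$. Evenness gives $f'(0)=0$, and $f(0)=g(0)=1$. The ODE has a regular point at $t=0$ (the coefficient $9(1-t^2)$ does not vanish there), so the solution with these initial data is unique and must coincide with this $f$.

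\textbf{Existence of a zero.} For the zero in $(0,1)$ we first note $c-a-b=\tfrac12-\tfrac{d-2}{6}=\tfrac{5-d}{6}\le0$ for $d\ge5$. We treat the case $d=5$ separately.
\begin{itemize}
\item[(i)] If $d>5$, then $c-a-b<0$. The asymptotics of ${}_2F_1$ as $\tau\to1^-$ give
$$g(\tau)\sim\frac{\Gamma(c)\Gamma(a+b-c)}{\Gamma(a)\Gamma(b)}\,(1-\tau)^{c-a-b}.$$
Here $\Gamma(b)=\Gamma(-\tfrac16)<0$, while $\Gamma(a)$, $\Gamma(c)$ and $\Gamma(a+b-c)$ are positive. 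Hence $g\to-\infty$ as $\tau\to1^-$. Since $g(0)=1$, $g$ has a zero in $(0,1)$.
\item[(ii)] If $d=5$, then $c-a-b=0$ and the asymptotics are logarithmic: $g\sim-\frac{\Gamma(a+b)}{\Gamma(a)\Gamma(b)}\log(1-\tau)$. The factor $-1/\Gamma(b)$ is positive and $-\log(1-\tau)\to+\infty$, but $\Gamma(a+b)=\Gamma(\tfrac12)>0$, $\Gamma(a)=\Gamma(\tfrac23)>0$ and $\Gamma(b)<0$ together give $g\to-\infty$ again.
\end{itemize}

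\textbf{Uniqueness of the zero.} This is the main obstacle. We would argue on the series. Since $b<0$ and $a,c>0$, every coefficient of $\tau^n$ with $n\ge1$ is
$$\frac{(a)_n(b)_n}{(c)_n\,n!}<0,$$
because $(b)_n<0$ for all $n\ge1$ (as $b\in(-1,0)$). Therefore $g(\tau)=1-\sum_{n\ge1}|c_n|\tau^n$ is strictly decreasing on $(0,1)$. It follows that $g$, and hence $f$ on $(0,1)$, has exactly one zero, which we call $t_d$.

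This sign argument also makes the asymptotic analysis nearly superfluous. It would be enough to show $g(\tau)<0$ for $\tau$ near $1$, for instance from $\sum_n|c_n|=\infty$ or $>1$ via monotone convergence. To avoid case distinctions we would use this route: by Gauss' summation the limit $g(1^-)$ would equal $\Gamma(c)\Gamma(c-a-b)/(\Gamma(c-a)\Gamma(c-b))$ when $c-a-b>0$. Since instead $c-a-b\le0$, the sum $\sum|c_n|$ diverges, so $g(1^-)=-\infty$.
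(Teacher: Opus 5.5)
Your proposal is correct and follows essentially the paper's route. You identify $f$ with ${}_2F_1(\frac{d-1}6,-\frac16;\frac12;t^2)$ via $\tau=t^2$, get strict monotonicity on $(0,1)$ from the negativity of all Taylor coefficients of order $n\ge1$ (equivalent to the paper's formula $f'(t)=-\frac{(d-1)t}{9}\,{}_2F_1(\frac{d+5}6,\frac56;\frac32;t^2)$), and use $f(t)\to-\infty$ as $t\to1^-$, which the paper simply asserts and you justify through the standard asymptotics of ${}_2F_1$ at $\tau=1$.

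Two small blemishes. In case (ii) your sign bookkeeping uses the minus sign twice, although the conclusion $g\to-\infty$ is right. In the closing remark, the divergence of $\sum|c_n|$ for $c-a-b\le0$ follows from Gauss's convergence test (or from $|c_n|\sim C\,n^{(d-11)/6}$), not from Gauss's summation formula.
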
	
\begin{proof}
The fact that $f$ coincides with \eqref{def:f} follows by \cref{lemma:hypergeometric-appendix}. Moreover, by \eqref{eq:derivative-hyper}, \be\label{eq:deriv}f'(t)=-\frac{(d-1)t}{9}{}_2F_1\left(\frac{d+5}6,\frac56;\frac32;t^2\right).\ee Then, by definition of the hypergeometric functions \eqref{def:hyper}, $f'(t)<0$ for every $t\in(0,1)$. 
On the other hand, $\lim_{t\to1^-}f(t)=-\infty$.
This implies that there exists a unique zero of $f$ in $(0,1)$, concluding the proof.
\end{proof}

\subsection{Construction of the Cartan cones}
Combining the results of this section, we conclude the construction of the one-phase Cartan cones in dimensions $d=5,8,14,26$.
\begin{proof}[Proof of \cref{p:existence}]
    The result follows by combining \eqref{e:general-alpha-homogeneous-laplacian}, \eqref{e:general-alpha-homogeneous-gradient-norm} and \cref{lemma:sol-of-ode}.
\end{proof}

\section{Instability of \texorpdfstring{$U_5$}{U5}, \texorpdfstring{$U_8$}{U8} and \texorpdfstring{$U_{14}$}{U14}}\label{sec3}
In this section we study the stability of the one-phase Cartan cones defined in \eqref{eq:general-cone-cartan}. 
The main result of this section is the following proposition.
\begin{proposition}\label{prop:main1}
    Let $U_d$ be the one-phase Cartan cone in $\R^d$, defined in \eqref{eq:general-cone-cartan} for $d=5,8,14,26$.
    Then $U_5$, $U_8$ and $U_{14}$ are unstable cones for the Alt-Caffarelli functional \eqref{eq:functional}.
\end{proposition}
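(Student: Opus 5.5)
We follow the Caffarelli--Jerison--Kenig strategy. Stability of a one-phase cone $U$ is equivalent to the Jerison--Savin stability inequality: for every $\varphi\in C^\infty_c(\R^d\setminus\{0\})$,
\be\label{eq:stab-plan}\int_{\Omega_U}|\nabla\varphi|^2\,dx\;\ge\;\int_{\partial\Omega_U\setminus\{0\}} H\,\varphi^2\,d\HH^{d-1},\ee
where $H$ is the mean curvature of the free boundary (positive when $\Omega_U$ is convex in the normal direction). We will exhibit test functions violating \eqref{eq:stab-plan}.

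\textbf{Step 1: the boundary curvature.} On $\partial\Omega_{U_d}=\{|\widetilde P|=t_d\}$ the mean curvature can be computed from $U_d$ itself. Since $|\nabla U|=1$ there and $\Delta U=0$, we have $H=-\partial_{\nu\nu}U$ (up to sign convention). Using \eqref{e:general-alpha-homogeneous-laplacian} with $\alpha=1$, together with the $1$-homogeneity, this reduces to $H=\kappa_d/|x|$ on the boundary. Here $\kappa_d$ is an explicit constant expressed through $f_d''(t_d)$ and $f_d'(t_d)$. Using the ODE of \cref{lemma:sol-of-ode} at $t_d$, where $f_d(t_d)=0$, the formula simplifies to
\[
\kappa_d=\frac{(d+1)\,t_d}{\sqrt{1-t_d^2}}\cdot\frac{1}{3}
\]
up to the normalization. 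We will double-check this constant via the normal derivative formula \eqref{e:general-alpha-homogeneous-gradient-normal}.

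\textbf{Step 2: separation of variables.} We take test functions $\varphi=\eta(|x|)\,\psi(\widetilde P(x))$, with $\psi$ defined on $[-t_d,t_d]$. By the coarea formula and \cref{lemma:generallemma}, the ratio of the two sides of \eqref{eq:stab-plan} splits into a radial part and an angular part. The radial part is a Hardy-type quotient with sharp constant $(d-2)^2/4$, attained in the limit by $\eta=|x|^{-(d-2)/2}$ with logarithmic cutoffs. The angular part is governed by the first eigenvalue $\lambda_1$ of the operator $-\big(9(1-t^2)\psi''-3(d+1)t\psi'\big)$ on $(-t_d,t_d)$ with Robin conditions $3\sqrt{1-t_d^2}\,\psi'=\pm\kappa_d\psi$ at $\pm t_d$. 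The measure here is the spherical weight $(1-t^2)^{-1/2}$ times the density of the level sets, which the same operator encodes. Instability therefore holds exactly when
\[
\lambda_1+\frac{(d-2)^2}{4}<0 .
\]
This is the Cartan analogue of the CJK criterion, which the paper calls \cref{lemma:criterion}. It suffices to plug in one explicit even $\psi$ — first $\psi\equiv1$, then if needed $\psi=f_d$ or $\psi=1+a t^2$ — and check that the Rayleigh quotient lies below $-(d-2)^2/4$.

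\textbf{Step 3: numerics, and the main obstacle.} For $d=5,8,14$ we will compute $t_d$ numerically from the hypergeometric series \eqref{def:f}, with rigorous error bounds obtained by truncating the series. We then evaluate the quotient, where all weights are explicit in $t$. The main obstacle is the sharpness of the comparison in $d=14$. There the margin is likely small, so the simple choice $\psi\equiv1$ may fail and a better trial function, or an accurate computation of $\lambda_1$ via its Sturm--Liouville/hypergeometric solution, will be needed, together with rigorous control of the numerical errors. A second point to handle carefully is justifying that the logarithmic cutoff in the radial variable, needed because the Hardy constant is not attained, makes \eqref{eq:stab-plan} fail strictly.
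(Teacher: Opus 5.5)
Your architecture matches the paper's \cref{lemma:criterion}: a radial profile $|x|^{-(d-2)/2}$ with logarithmic cutoffs, and a Robin problem for $9(1-t^2)\psi''-3(d+1)t\psi'$ on $(-t_d,t_d)$ with threshold $-(d-2)^2/4$. However, Step 1 is wrong, and this breaks Step 3. The curvature constant is $\kappa_d=(d-2)\,t_d/\sqrt{1-t_d^2}$, not $(d+1)t_d/(3\sqrt{1-t_d^2})$; see \cref{lemma3.2}. In computing $\partial_{\nu\nu}U$ you dropped the Hessian term of $\widetilde P$. Since $\nu\perp x$ and $f(t_d)=0$, one has $\partial_{\nu\nu}U=c_d^{-1/2}|x|\big(f''(t_d)|\nabla\widetilde P|^2+f'(t_d)\,\nu\cdot\nabla^2\widetilde P\,\nu\big)$. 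Moreover $\nu\cdot\nabla^2\widetilde P\,\nu=-9\widetilde P/|x|^2$; this is the identity $\nabla\widetilde P\cdot\nabla^2\widetilde P[\nabla\widetilde P]=-81\widetilde P(1-\widetilde P^2)|x|^{-4}$ from the proof of \cref{lemma3.2}, which uses $|\nabla P|^2=9|x|^4$. Together with the ODE at $t_d$ this gives $\partial_{\nu\nu}U=3(d-2)c_d^{-1/2}t_df'(t_d)/|x|$. Dividing by $|\partial_\nu U|=3c_d^{-1/2}|f'(t_d)|\sqrt{1-t_d^2}=1$ gives $(d-2)t_d/(\sqrt{1-t_d^2}\,|x|)$. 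So the Hessian term turns $d+1$ into $d-2$, and there is no factor $1/3$.

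This is not a normalization issue: your criterion is sharp, so the exact constant decides the outcome. With your $\kappa_d$, the Robin condition at $t_d$ reads $\psi'/\psi=\frac{d+1}{9}\frac{t_d}{1-t_d^2}$.

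For $d=8$, the even solution of the angular equation at eigenvalue $-(d-2)^2/4$ is $g_8(t)=(1-t^2)^{-1/2}$, and it satisfies this condition exactly. Being positive, it is the ground state, so your model gives $\lambda_1=-(d-2)^2/4$ and no instability.

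For $d=14$, $g_{14}'(t_d)/g_{14}(t_d)\approx 6.3$ exceeds $\frac{15}{9}\frac{t_d}{1-t_d^2}\approx 3.1$. So $g_{14}>0$ is a Robin supersolution, and your model gives $\lambda_1>-36$, i.e.\ stability. As written, the plan only reaches $d=5$.

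Your suggested trial functions are also inadequate. $\psi=f_d$ vanishes at $\pm t_d$ and has quotient $d-1>0$. $\psi\equiv 1$ gives about $-13$ for $d=14$, even with the correct $\kappa_{14}$, far above $-36$.

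The paper avoids any optimization. It takes exactly the even solution $g_d={}_2F_1\big(\frac{d-2}{12},\frac{d-2}{12};\frac12;t^2\big)$, so that $|x|^{-(d-2)/2}g_d(\widetilde P)$ is harmonic in $\Omega_U$. Then $Q(\eta_R w)$ reduces to a boundary term plus an $O(1/\log R)$ cutoff error. Instability becomes the single inequality $g_d'(t_d)/g_d(t_d)<\frac{d-2}{3}\frac{t_d}{1-t_d^2}$. It holds with margins $\approx -117.84,\,-6.31,\,-1.21$ for $d=5,8,14$; for $d=8$ it is automatic, since $g_8'/g_8=t/(1-t^2)$.
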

\subsection{Computation of the mean curvature}
In the following lemma we compute the mean curvature $H$ of the level sets of $\widetilde P$.

\begin{lemma}\label{lemma3.2}
Let $P:\R^d\to\R$ be a $3$-homogeneous polynomial satisfying \eqref{e:d-Cartan-harmonicity}, and let $\widetilde P:\R^d\setminus\{0\}\to\R$ be given by \eqref{e:Cartan-widetilde-P}. 
	Then, for every $t\in(-1,1)$, the level set $\{\widetilde P=t\}$ is a smooth $(d-1)$-manifold in $\R^d\setminus\{0\}$, and its mean curvature at a point $x\in\{\widetilde P=t\}\subset \R^d\setminus\{0\}$  is given by
	\begin{equation}\label{e:Cartan-zero-homogeneous-curvature}
H(x)=\frac{d-2}{|x|}\frac{|t|}{\sqrt{1-t^2}}.
	\end{equation}	
\end{lemma}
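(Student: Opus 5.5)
The plan is to use the formula for the mean curvature of a regular level set of a smooth function $g$, namely $H=\mathrm{div}\big(\nabla g/|\nabla g|\big)$, computed here with $g=\widetilde P$. Smoothness is immediate. By \eqref{e:Cartan-zero-homogeneous-gradient-norm} we have $|\nabla\widetilde P|^2=9|x|^{-2}(1-\widetilde P^2)>0$ whenever $|\widetilde P(x)|=|t|<1$. The implicit function theorem then shows that $\{\widetilde P=t\}$ is a smooth hypersurface in $\R^d\setminus\{0\}$. By contrast, $\nabla\widetilde P$ vanishes on $\{|\widetilde P|=1\}$, which explains why the statement excludes $t=\pm1$.

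For the curvature, I expand
$$\mathrm{div}\Big(\frac{\nabla\widetilde P}{|\nabla\widetilde P|}\Big)=\frac{\Delta\widetilde P}{|\nabla\widetilde P|}-\frac{\nabla\widetilde P\cdot\nabla|\nabla\widetilde P|}{|\nabla\widetilde P|^2}.$$
The first term is handled directly with \eqref{e:Cartan-zero-homogeneous-gradient-norm}:
$$\frac{\Delta\widetilde P}{|\nabla\widetilde P|}=\frac{-3(d+1)|x|^{-2}t}{3|x|^{-1}\sqrt{1-t^2}}=-\frac{(d+1)t}{|x|\sqrt{1-t^2}}.$$
For the second term, I write $|\nabla\widetilde P|=3|x|^{-1}\psi(\widetilde P)$ with $\psi(s)=\sqrt{1-s^2}$. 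Then
$$\nabla|\nabla\widetilde P|=-3|x|^{-3}x\,\psi+3|x|^{-1}\psi'(\widetilde P)\nabla\widetilde P.$$
Since $\widetilde P$ is zero-homogeneous, $x\cdot\nabla\widetilde P=0$, so the first summand drops out after pairing with $\nabla\widetilde P$. What remains is
$$\frac{\nabla\widetilde P\cdot\nabla|\nabla\widetilde P|}{|\nabla\widetilde P|^2}=3|x|^{-1}\psi'(t)=-\frac{3t}{|x|\sqrt{1-t^2}}.$$

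Subtracting the second term from the first gives
$$\mathrm{div}\Big(\frac{\nabla\widetilde P}{|\nabla\widetilde P|}\Big)=-\frac{(d-2)t}{|x|\sqrt{1-t^2}}.$$
This is \eqref{e:Cartan-zero-homogeneous-curvature} up to sign. The absolute value $|t|$ in the statement reflects a choice of orientation: I take the unit normal $\mp\nabla\widetilde P/|\nabla\widetilde P|$ according to $\mathrm{sign}(t)$, i.e. pointing towards the minimal level $\Sigma_0$, which makes $H\ge0$. At $t=0$ both orientations give zero, consistent with $\Sigma_0$ being minimal.

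The only step needing care is this orientation and sign convention for $H$, so I will state it explicitly. The computation itself is routine given \eqref{e:Cartan-zero-homogeneous-gradient-norm}. As a sanity check, $\Sigma_t$ is the link of the cone $\{\widetilde P=t\}$, and the cone's curvature equals the spherical mean curvature of $\Sigma_t$ scaled by $1/|x|$. This agrees with the known value $(d-2)\cot(3\theta)$ for Cartan isoparametric families, with $t=\sin 3\theta$ up to the same orientation convention.
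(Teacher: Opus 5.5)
Your proof is correct and follows essentially the paper's route: the level-set formula $H=\pm\mathrm{div}(\nabla\widetilde P/|\nabla\widetilde P|)$ evaluated with \eqref{e:Cartan-zero-homogeneous-gradient-norm}, where your normal pointing toward $\Sigma_0$ reproduces the paper's convention $H=-\mathrm{div}(\nabla\widetilde P/|\nabla\widetilde P|)$ for $t\ge0$, and where you get the Hessian term by differentiating $|\nabla\widetilde P|=3|x|^{-1}\sqrt{1-\widetilde P^2}$ and using $x\cdot\nabla\widetilde P=0$, a shortcut for the paper's longer expansion in terms of $\nabla P$. The only slip is in your closing sanity check, which is not needed for the proof: with $t=\sin 3\theta$ one has $t/\sqrt{1-t^2}=\tan 3\theta$, so the isoparametric value $(d-2)\cot 3\theta$ goes with the focal-distance parametrization $t=\cos 3\theta$, not with $t=\sin 3\theta$.
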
	
\begin{proof}
Without loss of generality, let $t\in[0,1)$ and consider the level set $\{\widetilde P=t\}$. Since the normal to this level set is given by $\nabla \widetilde P/|\nabla\widetilde P|$, then the mean curvature of $\{\widetilde P=t\}$ is given by the formula:
\begin{align*}
	H&=-\text{\rm div}\left(\frac{\nabla\widetilde P}{|\nabla\widetilde P|}\right)=-\frac{\Delta \widetilde P}{|\nabla \widetilde P|}+\frac{\nabla \widetilde P\cdot\nabla^2\widetilde P[\nabla \widetilde P]}{|\nabla \widetilde P|^3}.
\end{align*}
The expressions for $\Delta\widetilde P$ and $|\nabla\widetilde P|$ are already given by \eqref{e:Cartan-zero-homogeneous-gradient-norm}. Thus, we only need to compute the term $\nabla \widetilde P\cdot\nabla^2\widetilde P[\nabla \widetilde P]$. In order to do so, by differentiating the first identity in \eqref{e:Cartan-zero-homogeneous-gradient-norm} and using \eqref{e:Cartan-zero-homogeneous-gradient-full}, we get 
\begin{align*}
2\nabla^2\widetilde P[\nabla\widetilde P]&=\nabla \Big(\frac{9}{|x|^2}(1-\widetilde P^2)\Big)=-18(1-\widetilde P^2)\frac{x}{|x|^4}-18\frac{\widetilde P}{|x|^2} \nabla\widetilde P\\
&=-18(1-\widetilde P^2)\frac{x}{|x|^4}-18\frac{\widetilde P}{|x|^2}\frac{1}{|x|^3}\Big(-3\frac{x}{|x|^2}P(x)+\nabla P(x)\Big)\\
&=-18(1-4\widetilde P^2)\frac{x}{|x|^4}-18\frac{\widetilde P}{|x|^5}\nabla P(x).
\end{align*}
Combining this identity with \eqref{e:Cartan-zero-homogeneous-gradient-full}, we obtain 
\begin{align*}
\nabla\widetilde P\cdot	\nabla^2\widetilde P&[\nabla\widetilde P]=\left(-3\frac{x}{|x|^2}\widetilde P(x)+\frac{1}{|x|^3}\nabla P(x)\right)\cdot \bigg(-9(1-4\widetilde P^2)\frac{x}{|x|^4}-9\frac{\widetilde P}{|x|^5}\nabla P(x)\bigg)\\
&=9\bigg(3(1-4\widetilde P^2)\frac{\widetilde P}{|x|^4}+3\widetilde P^2\frac{1}{|x|^7}(x\cdot \nabla P)-(1-4\widetilde P^2)\frac{1}{|x|^7}(x\cdot \nabla P)-\frac{\widetilde P}{|x|^8}|\nabla P|^2\bigg).
\end{align*} Then, using that $P$ is 3-homogeneous and the second identity in \eqref{e:d-Cartan-harmonicity}, we get
\begin{align*}
\nabla\widetilde P\cdot	\nabla^2\widetilde P[\nabla\widetilde P]&=9\bigg(3(1-4\widetilde P^2)\frac{\widetilde P}{|x|^4}+3\widetilde P^2\frac{1}{|x|^7}3P-(1-4\widetilde P^2)\frac{1}{|x|^7}3P-\frac{\widetilde P}{|x|^8}9|x|^4\bigg)\\
&=\frac{9}{|x|^4}\left({3(1-4\widetilde P^2)\widetilde P}+9\widetilde P^3-{3(1-4\widetilde P^2)\widetilde P}-9\widetilde P\right)=-\frac{81}{|x|^4}\widetilde P\left(1-\widetilde P^2\right).
\end{align*}
Thus, for the mean curvature $H$, we compute 
\begingroup\allowdisplaybreaks
\begin{align*}	
H&=-\frac{\Delta \widetilde P}{|\nabla \widetilde P|}+\frac{\nabla \widetilde P\cdot\nabla^2\widetilde P[\nabla \widetilde P]}{|\nabla \widetilde P|^3}\\
&=\frac{1}{3|x|^{-1}(1-\widetilde P^2)^{1/2}}3(d+1)|x|^{-2}\widetilde P-\frac{1}{27|x|^{-3}(1-\widetilde P^2)^{3/2}}\frac{81}{|x|^4}\widetilde P\big(1-\widetilde P^2\big)\\
&=\frac{1}{|x|^{-1}(1-\widetilde P^2)^{1/2}}(d+1)|x|^{-2}\widetilde P-\frac{1}{|x|^{-3}(1-\widetilde P^2)^{1/2}}\frac{3}{|x|^4}\widetilde P\\
&=\frac{d-2}{|x|}\frac{\widetilde P}{(1-\widetilde P^2)^{1/2}},
\end{align*}
\endgroup
which concludes the proof.
\end{proof}

\subsection{A criterion for the instability of the Cartan cones}
In the following lemma we show an instability criterion for the Cartan cones. This is the analogue of the criterion in \cite{CaffarelliJerisonKenig04:NoSingularCones3D} for axially symmetric solutions.
\begin{lemma}\label{lemma:criterion}
Let $P:\R^d\to\R$ be a $3$-homogeneous polynomial satisfying \eqref{e:d-Cartan-harmonicity}, and let $\widetilde P:\R^d\setminus\{0\}\to\R$ be given by \eqref{e:Cartan-widetilde-P}. Let $f$ be the function defined in \eqref{def:f}, and let $t_d\in(0,1)$ be the unique zero of $f$ in $(0,1)$. 
Suppose that there exists a solution $g:[-t_d,t_d]\to\R$ of the problem 
\be\label{eq:odeg}\begin{cases}
    9(1-t^2)g''(t)-3(d+1)tg'(t)-\left(\frac{d-2}{2}\right)^2g(t)=0\quad\text{in } (-t_d,t_d),\\
    g(0)=1,\  g'(0)=0
\end{cases}\ee and that $g$ satisfies the inequality
\be\label{eq:ineq-g}\frac{g'(t_d)}{g(t_d)}<\frac{d-2}{3}\frac{t_d}{1-t_d^2}.\ee
Then, the Cartan cone $U_d$ defined in \eqref{eq:general-cone-cartan} is unstable in $\R^d$.

\end{lemma}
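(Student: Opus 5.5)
We follow the Caffarelli--Jerison--Kenig approach. Stability of a one-phase cone $U$ is equivalent (see \cite{JerisonSavin15:NoSingularCones4D, CaffarelliJerisonKenig04:NoSingularCones3D}) to the non-negativity of the quadratic form
$$Q(\varphi):=\int_{\Omega_U}|\nabla\varphi|^2\,dx-\int_{\partial\Omega_U\setminus\{0\}}H\,\varphi^2\,d\HH^{d-1},\qquad \varphi\in C^\infty_c(\R^d\setminus\{0\}),$$
where $H$ is the mean curvature of the free boundary, oriented so that it is positive when $\Omega_U$ is "convex-like" (it arises from the second variation via the identity $\partial_\nu|\nabla U|^2=-2H$ on the free boundary). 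By \cref{lemma3.2}, on $\partial\Omega_{U_d}=\{|\widetilde P|=t_d\}$ we have $H=\frac{(d-2)}{|x|}\frac{t_d}{\sqrt{1-t_d^2}}$. For the sign convention, note that the outward normal of $\Omega_{U_d}$ at $\{\widetilde P=t_d\}$ is $\nabla\widetilde P/|\nabla\widetilde P|$, so the relevant curvature is exactly the one computed in \cref{lemma3.2}. To prove instability, we therefore exhibit $\varphi$ with $Q(\varphi)<0$.

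\textbf{Separation of variables.} We take test functions of the form $\varphi(x)=\eta(|x|)\,|x|^{-(d-2)/2}g(\widetilde P(x))$. By \cref{lemma:generallemma} with $\alpha=-(d-2)/2$, we have $\alpha(\alpha+d-2)=-\big(\frac{d-2}{2}\big)^2$. Hence the ODE \eqref{eq:odeg} says precisely that $v:=|x|^{-(d-2)/2}g(\widetilde P)$ is harmonic in $\Omega_{U_d}$; this is the critical, Hardy-threshold homogeneity. Moreover, \eqref{e:general-alpha-homogeneous-gradient-normal} gives the normal derivative on $\{\widetilde P=\pm t_d\}$: namely $\partial_\nu v=3|x|^{-d/2}\,g'(t_d)\sqrt{1-t_d^2}$, using that $g$ is even, which holds since the ODE is invariant under $t\mapsto -t$ and the initial data are even. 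Write $\lambda_0:=\int$ over the spherical region of $|\nabla_\theta \psi|^2$ minus the boundary term, where $\psi=g\circ\widetilde P$ restricted to $\mathbb S^{d-1}$.

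\textbf{Key computation.} Integrating by parts on the spherical domain $S:=\Omega_{U_d}\cap\mathbb S^{d-1}$ and using the spherical equation $-\Delta_{\mathbb S}\psi=\big(\frac{d-2}{2}\big)^2\psi$ (with a sign we must track: from \eqref{eq:odeg}, $\Delta_{\mathbb S}\psi=\big(\frac{d-2}{2}\big)^2\psi$, i.e.\ $\psi$ is a positive "ground state" with eigenvalue $-\big(\frac{d-2}{2}\big)^2$ for the Robin problem), we get
$$\int_S|\nabla_\theta\psi|^2-\int_{\partial S}H_0\psi^2=\int_{\partial S}\psi\,\partial_\nu\psi-H_0\psi^2-\Big(\tfrac{d-2}{2}\Big)^2\int_S\psi^2 .$$
Here $H_0=(d-2)t_d/\sqrt{1-t_d^2}$ and $\partial_\nu\psi=3g'(t_d)\sqrt{1-t_d^2}$. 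Hypothesis \eqref{eq:ineq-g} is exactly $3g'(t_d)\sqrt{1-t_d^2}<H_0\,g(t_d)$, so the boundary term is negative (provided $g(t_d)>0$; if $g$ vanishes in $(0,t_d]$, we instead use $g$ restricted to its nodal domain, which gives an even stronger conclusion). Thus the spherical Robin-type quadratic form $\int_S|\nabla_\theta\psi|^2-\int_{\partial S}H_0\psi^2$ has first eigenvalue $\mu_1<-\big(\frac{d-2}{2}\big)^2$.

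\textbf{Radial part.} Writing $Q$ in polar coordinates for $\varphi=a(\rho)\psi(\theta)$ gives
$$Q=\int_0^\infty\Big(a'(\rho)^2\int_S\psi^2+\frac{a^2}{\rho^2}\big(\textstyle\int_S|\nabla_\theta\psi|^2-\int_{\partial S}H_0\psi^2\big)\Big)\rho^{d-1}\,d\rho .$$
Since the sharp one-dimensional Hardy constant is $\big(\frac{d-2}{2}\big)^2$, i.e.\ $\inf\int a'^2\rho^{d-1}/\int a^2\rho^{d-3}=\big(\frac{d-2}{2}\big)^2$ over $a\in C_c^\infty(0,\infty)$, and the angular ratio is strictly below $-\big(\frac{d-2}{2}\big)^2$, we can choose $a$ (e.g.\ $a=\rho^{-(d-2)/2}$ cut off logarithmically) to make $Q<0$. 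The main obstacle is the sign and normalization bookkeeping: checking that the second variation's curvature term matches the orientation of \cref{lemma3.2}, and handling the possible vanishing of $g$ before $t_d$.
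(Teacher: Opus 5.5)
Your argument is correct and is essentially the paper's: the paper inserts the same harmonic function $|x|^{-(d-2)/2}g(\widetilde P(x))$, multiplied by a logarithmic cutoff $\eta_R(|x|)$, directly into $Q$ and integrates by parts. That is your separated computation with $a=\eta_R\,\rho^{-(d-2)/2}$ carried out explicitly, with a boundary gain of order $\log R$ against a cutoff error of order $1/\log R$, rather than argued through the sharpness of Hardy's constant.

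Two small corrections. First, your orientation remark is reversed: the $H$ of \cref{lemma3.2} equals $-\operatorname{div}\nu$ for the outward normal $\nu$. So $\partial_\nu|\nabla U|^2=2H$, and $H>0$ when the complement of $\Omega_U$ (not $\Omega_U$ itself) is convex-like, which is the situation here. Second, the nodal-domain hedge is unnecessary: $g={}_2F_1\big(\frac{d-2}{12},\frac{d-2}{12};\frac12;t^2\big)\ge1$, and in any case multiplying \eqref{eq:ineq-g} by $g(t_d)^2$ makes the boundary term negative whatever the sign of $g(t_d)$.
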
	
\begin{proof}
We recall that the stability inequality of the Alt-Caffarelli problem is (see \cite{CaffarelliJerisonKenig04:NoSingularCones3D})
$$Q(\varphi):=\int_{\Omega_U} |\nabla \varphi|^2\,dx- \int_{\partial \Omega_U} H\varphi^2\,d\HH^{d-1}\ge0\quad\text{for every }\varphi\in C^1_c(\R^d\setminus\{0\}),$$ where $H$ is the mean curvature of the free boundary $\partial \Omega_U$ and $U:=U_d$.

We will use as a test function the function 
$w(x):=|x|^{-\frac{d-2}{2}}g(\widetilde P(x)),$	for $|\widetilde P(x)|\le t_d$, in the stability inequality above. Notice that, since $\Omega_U:=\{|\widetilde P(x)|< t_d\}$, it is sufficient to define $w$ for $|\widetilde P(x)|\le t_d$. 
Moreover, since $g$ solves the ODE \eqref{eq:odeg}, then $w$ is harmonic in $\Omega_U \setminus\{0\}$.

Since the integrals involving $w$ in the stability condition diverge near $0$ and at infinity, we need to multiply it by a cutoff function. Precisely, for $R>1$ large to be chosen, we will use $\varphi(x):=\eta_R(|x|)w(x)$, where $\eta_R$ is defined as 
$$\eta_R(\rho):=\begin{cases}
    0 &\quad\text{in }[0,R^{-2}],\\
    \log(R^2\rho)/\log R &\quad\text{in }[R^{-2},R^{-1}],\\
    1 &\quad\text{in }[R^{-1},R],\\
    \log(R^2/\rho)/\log R  &\quad\text{in }[R,R^2],\\
    0 &\quad\text{in }[R^2,+\infty).
\end{cases}$$
Using that $w$ is harmonic in $\Omega_U\setminus\{0\}$, we have $$\text{div}(w\eta_R^2\nabla w)=|\nabla w|^2\eta_R^2+2w\eta_R\nabla w\cdot\nabla \eta_R=|\nabla (\eta_Rw)|^2-w^2|\nabla\eta_R|^2.$$ Then, by the divergence theorem, we get
\be\label{eq:stab-eq}Q(\varphi)=Q(\eta_Rw)=\int_{\partial\Omega_U}\eta_R^2 w(\partial_\nu w-Hw)\,d\HH^{d-1}+\int_{\Omega_U}w^2|\nabla \eta_R|^2\,dx,\ee
where $\nu$ is the outward normal to $\partial\Omega_U$. 

We recall the spherical free boundary $\partial\Omega_U\cap \mathbb{S}^{d-1}=\Sigma_{t_d}\cup \Sigma_{-t_d}$, where $\Sigma_t:=\{\widetilde P=t\}\cap \mathbb{S}^{d-1}$. Since the two contributions of \eqref{eq:stab-eq} on $\Sigma_{t_d}$ and $\Sigma_{-t_d}$ are the same, we only compute the quantities above for $\Sigma_{t_d}$.
 Thanks to \eqref{e:Cartan-zero-homogeneous-curvature}, we know that 
$$H(x)=\frac{d-2}{|x|}\frac{t_d}{\sqrt{1-t_d^2}}\quad\text{on }\Sigma_{t_d}.$$
On the other hand, using \eqref{e:general-alpha-homogeneous-gradient-normal} on the portion of the boundary $\Sigma_{t_d}$, we compute 
$$\partial_\nu w= \frac{\nabla \widetilde P}{|\nabla \widetilde P|}\cdot\nabla w=3|x|^{-\frac{d-2}{2}-1}g'(\widetilde P)\big(1-\widetilde P^2\big)^{1/2}=3|x|^{-d/2}g'(t_d)\sqrt{1-t_d^2}.$$
Therefore, on $\Sigma_{t_d}$ it holds 
\bea\frac{\partial_\nu w}{w}=\frac1{|x|}\frac{3g'(t_d)}{g(t_d)}\sqrt{1-t_d^2},\eea
and so, 
$$\frac{\partial_\nu w}{w}-H=\frac1{|x|}\left(\frac{3g'(t_d)}{g(t_d)}\sqrt{1-t_d^2}-(d-2)\frac{t_d}{\sqrt{1-t_d^2}}\right)=:-\frac{\delta_d}{|x|},$$ where $\delta_d>0$ by \eqref{eq:ineq-g}. We can write the quadratic form $Q(\varphi)$ in terms of $\delta_d$ as:
$$Q(\varphi)=Q(\eta_Rw)=-\delta_d\int_{\partial\Omega_U}\eta_R^2 \frac{w^2}{|x|}\,d\HH^{d-1}+\int_{\Omega_U}w^2|\nabla \eta_R|^2\,dx.$$
For $\rho=|x|$, we have $w^2=\rho^{-(d-2)}g(t)^2$ on $\Omega_U$ and $d\HH^{d-1}_{\partial\Omega_U}=\rho^{d-2}\,d\HH^{d-2}_{\Sigma_{t_d}}\,d\rho$. Therefore, the quadratic form $Q$ can be estimated by $$Q(\varphi)\le -C_1\delta_d\int_0^{+\infty}\frac{|\eta_R(\rho)|^2}{\rho}\,d\rho+C_2\int_0^{+\infty}\rho|\eta_R'(\rho)|^2\,d\rho,$$ for some constants $C_1,C_2>0$ depending on $\|g\|_{L^\infty((0,t_d))}$ and $d$. Using the definition of $\eta_R$, we obtain that $$Q(\varphi)\le -\widetilde C_1\delta_d\log R+\frac{\widetilde C_2}{\log R},$$ and the right-hand side above is negative for $R$ sufficiently large, concluding the proof.
\end{proof}	
\subsection{Instability of $U_d$, for $d=5,8,14$}
Using \cref{lemma:criterion}, we can conclude the proof of instability of the Cartan cones $U_d$, in dimensions $d=5,8,14$.
\begin{proof}[Proof of \cref{prop:main1}]
    We consider the function $g_d:(-1,1)\to\R$ defined as $$g_d(t)={}_2F_1\left(\frac{d-2}{12},\frac{d-2}{12};\frac12;t^2\right).$$ By \cref{lemma:hypergeometric-appendix}, we obtain that $g_d$ solves the ODE \eqref{eq:odeg}. We set 
    $$\Delta_d:=\frac{g_d'(t_d)}{g_d(t_d)}-\frac{d-2}{3}\frac{t_d}{1-t_d^2},$$
    and numerically (we used Mathematica) we compute $$\Delta_5\approx-117.84<0,\qquad \Delta_8\approx-6.31<0,\qquad \Delta_{14}\approx-1.21<0.
    $$ Therefore, we conclude by \cref{lemma:criterion}.
\end{proof}

\section{Minimality of \texorpdfstring{$U_{26}$}{U26}}\label{sec4}
In this section we prove the minimality of $U_{26}$, showing the following proposition.
\begin{proposition}\label{prop:main2}
    Let $U_{26}$ be the one-phase Cartan cone in $\R^{26}$, defined in \eqref{eq:general-cone-cartan}. Then, $U_{26}$ is a minimizing cone for the Alt-Caffarelli functional \eqref{eq:functional}.
\end{proposition}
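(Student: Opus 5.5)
The plan follows the De Silva–Jerison barrier method, as announced in the strategy section. The abstract step comes first. Suppose there exist a strict subsolution $V$ and a strict supersolution $W$ of the one-phase problem in $\R^{26}$ with the following properties:
\begin{itemize}
\item $V\le U_{26}\le W$;
\item $\{V>0\}$ omits a neighborhood of the origin, while $\{W>0\}$ contains one;
\item the rescalings $V_\lambda(x):=\lambda^{-1}V(\lambda x)$ and $W_\lambda(x):=\lambda^{-1}W(\lambda x)$ converge monotonically to $U_{26}$ as $\lambda\to\infty$ (or $\lambda\to0$), sweeping out the region below and above $U_{26}$ respectively.
\end{itemize}
Then $U_{26}$ is minimizing. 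To see this, let $u$ minimize $J(\cdot,B_1)$ with $u=U_{26}$ on $\partial B_1$. Using the strict comparison principle for minimizers against strict sub/supersolutions, we slide the family $W_\lambda$ down until first contact. Contact cannot occur in the interior, because $u$ is harmonic there and $W_\lambda$ is strictly superharmonic. It cannot occur on the free boundary either, because $|\nabla W_\lambda|<1$ would contradict the viscosity free boundary condition satisfied by minimizers. This gives $u\le U_{26}$, and symmetrically $u\ge U_{26}$ using $V_\lambda$. Hence $u=U_{26}$. Uniqueness of the minimizer then yields $J(U_{26},B_1)\le J(v,B_1)$ for every competitor. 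This step is the content of the paper's \cref{lemma:subsol-and-supersol}, and I take it to be standard.

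The subsolution is built as in De Silva–Jerison. Take $V=U_{26}-\varepsilon\,h$ near infinity, or more precisely $V(x)=c\,|x|\,f(\widetilde P(x))$ perturbed by a homogeneous function $|x|^{\alpha}\phi(\widetilde P)$ with $\alpha<1$. Here $\phi$ is chosen from the ODE of \cref{lemma:generallemma},
$$\alpha(\alpha+24)\phi-81t\phi'+9(1-t^2)\phi''=0,$$
with the sign of the Laplacian perturbed slightly to make it strictly subharmonic. The exponent $\alpha$ and the profile are tuned so that the free boundary lies inside $\{|\widetilde P|<t_{26}\}$. Using \eqref{e:general-alpha-homogeneous-gradient-norm}, the gradient condition becomes a one-variable inequality at the perturbed zero, which is checked numerically. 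The subsolution need not be positive near $0$, since its positivity set lies inside that of $U_{26}$, so no degeneracy arises.

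The supersolution is the main obstacle, because its positivity set must contain a neighborhood of the origin and therefore reach the degenerate levels $\widetilde P=\pm1$. I work in the coordinates $(s,r)=(\rho\sin\theta,\rho\cos\theta)$ with $\sin 3\theta=\widetilde P$. Rewriting \eqref{e:general-alpha-homogeneous-laplacian} in these variables gives a two-dimensional operator on the sector $\mathcal D$, which is singular on $s=\pm r/\sqrt3$. The construction has three pieces:
\begin{itemize}
\item For $r\ge1$, set $W_1=U+\varepsilon|x|^{\alpha}\psi(\widetilde P)$ with $\alpha<1$ (or $\alpha>1$, as appropriate), using the second hypergeometric solution so that $\psi>0$ on the free boundary. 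This pushes the free boundary outward while keeping $|\nabla W_1|<1$ there, and $W_1$ tends to $U$ under rescaling.
\item For $r_\ast\le r\le1$, take a De Silva–Jerison type profile $W_2$ that is continuous with $W_1$ at $r=1$, with a kink of the correct sign, namely normal derivative dropping, which keeps $W$ superharmonic in the viscosity sense. Its free boundary widens until it meets $\partial\mathcal D$ orthogonally at $r=r_\ast$.
\item For $r\le r_\ast$, take $W_3$ positive on all of $\mathcal D$ and glued $C^1$ to $W_2$. It must satisfy the Neumann compatibility condition \eqref{eq:compat:cond}, which is exactly what makes the singular terms harmless, so that $W_3$ is genuinely superharmonic across $\widetilde P=\pm1$ in $\R^{26}$.
\end{itemize}
A natural choice for $W_3$ is $a+b\,\rho^2\,\chi(\widetilde P)$, or a quadratic polynomial in $(s,r)$, which can be made strictly superharmonic with explicit coefficients.

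Finally, I will reduce all the requirements to finitely many explicit inequalities in the parameters $\alpha,\varepsilon,r_\ast$ and the hypergeometric values at $t_{26}$. These requirements are superharmonicity in each region, $|\nabla W|<1$ on the free boundary, $W\ge U$, the gluing conditions, and monotonicity under rescaling. I will then verify the inequalities numerically. I expect the compatibility of the inner piece $W_3$ with the $C^1$ gluing at $r_\ast$ and with strict superharmonicity to be the delicate point. If it fails, I will first try to enlarge the family for $W_3$ by adding a second homogeneous harmonic mode $\rho^{\beta}\chi_\beta(\widetilde P)$ satisfying the Neumann condition, which gives enough free parameters.
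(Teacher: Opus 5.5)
Your overall plan is the paper's: \cref{lemma:subsol-and-supersol}, a subsolution $c_d^{-1/2}(Z-|x|^\alpha g_\alpha(\widetilde P))_+$ checked through a one-variable inequality for $G(\alpha,t)$, and a three-piece supersolution in the $(s,r)$ coordinates with the Neumann condition \eqref{eq:compat:cond} on $\partial\mathcal D$. The gap is in the inner piece $W_3$, which is exactly the paper's new ingredient.

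You require $W_3$ to be glued $C^1$ to $W_2$ along $\{r=r_\ast\}\cap\mathcal D$. That means $W_3(\cdot,r_\ast)=W_2(\cdot,r_\ast)$ and $\partial_rW_3(\cdot,r_\ast)=\partial_rW_2(\cdot,r_\ast)$ as \emph{functions} of $s\in[-r_\ast/\sqrt3,r_\ast/\sqrt3]$. For a De Silva--Jerison profile $W_2=y(r)h(s/v_2(r))$, the trace is $y(r_\ast)h(s/v_2(r_\ast))$, where $h=w_1(\cdot,1)$ is built from $f$ and $g_\alpha$. Your candidates cannot reproduce this:
\begin{itemize}
\item A quadratic polynomial in $(s,r)$ satisfies \eqref{eq:compat:cond} on both lines $s=\pm r/\sqrt3$ only if it is radial, $a+c\rho^2$. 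Its trace $a+c(s^2+r_\ast^2)$ is not of the required form.
\item The same objection applies to $a+b\rho^2\chi(\widetilde P)$ with an explicit $\chi$. If instead you let $\chi$ be a free function fixed by the trace, the $r$-derivative is then forced, so the $C^1$ condition is overdetermined.
\item Adding finitely many modes $\rho^\beta\chi_\beta(\widetilde P)$ adds only finitely many parameters against an infinite-dimensional matching condition.
\end{itemize}
So the fallback you describe does not close the construction.

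The missing idea is to keep the \emph{same} profile $h$ in the inner region and change only its scaling. Set $w_3(s,r)=y(r_\ast)h(s/v_3(r))$ with $v_3(r)=C_\ast r^{-3}$, so that $w_3$ depends only on $sr^3$. This choice does four things at once:
\begin{itemize}
\item The $C^1$ gluing reduces to the three scalar conditions \eqref{eq:cond7-supersol}: $y'(r_\ast)=0$, $v_2(r_\ast)=v_3(r_\ast)=r_\ast/(\sqrt3\,\overline s)$, and $v_2'(r_\ast)=v_3'(r_\ast)=-\sqrt3/\overline s$. These are absorbed by the polynomial ansatz for $y$ and $v_2$, and they express that the free boundary of $W_2$ meets $\partial\mathcal D$ orthogonally.
\item Condition \eqref{eq:compat:cond} holds identically, since $(r\partial_s-s\partial_r)(sr^3)=r^2(r^2-3s^2)$ vanishes on $\partial\mathcal D$.
\item $W_3>0$ for $r<r_\ast$, since $|s|/v_3(r)\le\overline s(r/r_\ast)^4<\overline s$.
\item In \eqref{eq:mathcalL} the singular terms cancel exactly, giving $N=0$ and $M=-(d+4)/r^2$. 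Hence $\mathcal Lw_3\le0$ reduces to the one-variable conditions \eqref{eq:cond7.5-supersol}, namely $h''\le0$ and $\xi h'\le0$.
\end{itemize}
A minor point: for $W_1$ no ``second'' hypergeometric solution is needed. The even solution $g_\alpha$ with $\alpha\in(-24,0)$ is already positive on $(-1,1)$.
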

In order to prove \cref{prop:main2}, we use the following lemma which was essentially proved in \cite{DeSilvaJerison09:SingularConesIn7D} (see also \cite[Lemma 2.4]{ftw1}).

\begin{lemma}\label{lemma:subsol-and-supersol}
    Let $U$ be a $1$-homogeneous global solution of \eqref{eq:onephase}. Assume that:
    \begin{itemize}
        \item[(i)] there exists a strict subsolution $V\le U$ in $\R^d$, satisfying $V_\rho(x):=\rho^{-1}V(\rho x)\to U$ as $\rho\to+\infty$, with $\partial\Omega_V\cap \{0\}=\emptyset$;
        \item[(ii)] there exists a strict supersolution $W\ge U$ in $\R^d$, satisfying $W_\rho(x):=\rho^{-1}W(\rho x)\to U$ as $\rho\to+\infty$, with $B_{r_\ast}\subset \Omega_W$, for some $r_\ast>0$.
    \end{itemize} Then, $U$ is minimizing for the Alt-Caffarelli functional.
\end{lemma}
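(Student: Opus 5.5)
We will use the De Silva--Jerison foliation-plus-comparison argument. Fix a ball $B_R$ and let $u$ be a minimizer of $J(\cdot,B_R)$ with $u=U$ on $\partial B_R$. We show $u=U$; minimality of $U$ in every bounded domain then follows. To handle non-uniqueness, we take $u^+$ and $u^-$, the largest and smallest minimizers with boundary datum $U$. These exist and are ordered, since the minimum and maximum of two minimizers with the same boundary data are again minimizers. It then suffices to prove $u^+\le U$, and symmetrically $u^-\ge U$.

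\textbf{Upper bound.} Consider the family $W_\rho(x)=\rho^{-1}W(\rho x)$ for $\rho>0$. Each $W_\rho$ is again a strict supersolution, by the scaling invariance of the problem. Since $W\ge U$ and $U$ is $1$-homogeneous, we get $W_\rho\ge U$ for every $\rho$. In particular $W_\rho\ge U=u^+$ on $\partial B_R$ for every $\rho$.

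As $\rho\to0$, the set $\Omega_{W_\rho}\supset B_{r_\ast/\rho}$ eventually contains $\overline B_R$. Moreover $W_\rho$ is large on $B_R$, since it is superharmonic, positive and blows up in the rescaling, which we can check from the construction. Hence $W_\rho\ge u^+$ in $B_R$ for $\rho$ small, using that $u^+$ is bounded by $\max U$ on $B_R$ via harmonic domination. We then increase $\rho$ and let $\bar\rho$ be the supremum of the $\rho$ for which $W_{\rho'}\ge u^+$ on $B_R$ for all $\rho'\le\rho$.

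If $\bar\rho<\infty$, then by continuity $W_{\bar\rho}\ge u^+$ with a touching point $x_0\in \overline B_R$. The touching point cannot lie on $\partial B_R$: there we need strictness, obtained by first replacing $U$ with $W_{\rho_0}$ on the boundary, or by a slight inward shrinking of the domain. This technical point is handled as in \cite{DeSilvaJerison09:SingularConesIn7D}. An interior touching point at which $u^+>0$ contradicts the strong maximum principle, because $W_{\bar\rho}$ is strictly superharmonic. A touching point on the free boundary $\partial\Omega_{u^+}$ contradicts the strict free boundary inequality of $W$ through the viscosity formulation of the free boundary condition for minimizers (Lipschitz and nondegenerate, with $|\nabla u^+|=1$ in the viscosity sense). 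Therefore $\bar\rho=\infty$. Letting $\rho\to\infty$ and using $W_\rho\to U$, we obtain $u^+\le U$.

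\textbf{Lower bound.} The argument is symmetric with $V_\rho$. Since $0\notin\Omega_V$ neighbourhood-wise ($\partial\Omega_V$ avoids $0$, so $V\equiv0$ near $0$), for $\rho$ small $V_\rho$ vanishes on $\overline B_R$ and thus $V_\rho\le u^-$. We slide $\rho$ upward in the same way: the first touching point is excluded by the strict subsolution property together with the fact that $u^-$ is a viscosity supersolution. Letting $\rho\to\infty$ gives $U\le u^-$.

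Combining the two bounds, $u^-\le u^+\le U\le u^-$, so every minimizer equals $U$ and $U$ is minimizing.

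The main obstacle is making the comparison rigorous at the moving boundary and at free boundary contact points, since minimizers are only viscosity solutions. This includes ensuring that the first contact does not occur on $\partial B_R$, and controlling the convergence $W_\rho\to U$ locally uniformly, together with their positivity sets in the Hausdorff sense.
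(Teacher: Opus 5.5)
Your proposal is the De Silva--Jerison sliding (foliation) argument, which is exactly what the paper relies on for this lemma: the paper gives no proof of its own and refers to \cite{DeSilvaJerison09:SingularConesIn7D} and \cite[Lemma 2.4]{ftw1}. Two details to tighten: the paper's strict supersolution is strict only in the free boundary inequality ($\mathcal{L}w\le 0$, not $<0$), so an interior contact must be excluded by propagating equality via the strong maximum principle to a free boundary (or fixed boundary) contact, not by strict superharmonicity; and no modification of the boundary data is needed, since $W_\rho>U$ on $\overline{\Omega_U}$ and $V_\rho<U$ on $\overline{\Omega_{V_\rho}}$ already follow from the same contact argument run against the solution $U$ (also, the sliding works for any minimizer directly, so $u^\pm$ are unnecessary).
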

For the definition of strict subsolutions and supersolutions of the Alt-Caffarelli problem, we refer to \cite{DeSilvaJerison09:SingularConesIn7D}.

\medskip

In the following two subsections, we construct a subsolution and a supersolution satisfying \cref{lemma:subsol-and-supersol}. 
We work in dimension $d:=26$.
For simplicity, we set $P:=P_{26}$, $U:=U_{26}$ and $f:=f_{26}$, as defined in \eqref{def:P}, \eqref{eq:general-cone-cartan} and \eqref{def:f}. We also recall that $t_d\in(0,1)$ is the unique zero of $f$ in $(0,1)$.

\subsection{Construction of the subsolution}\label{subsec:sub}
We construct a subsolution satisfying \cref{lemma:subsol-and-supersol}, for $U:=U_{26}$. We follow the ideas from \cite{DeSilvaJerison09:SingularConesIn7D}. We write \be\label{eq:def-z}U(x)=c_d^{-1/2}Z(x)\quad\text{where}\quad Z(x):=\begin{cases}
    |x|f(\widetilde P(x))&\quad\text{if } |\widetilde P(x)|\le t_d,\medskip\\
		0&\quad\text{otherwise},
\end{cases}\ee and we recall that $c_d=9(1-t_d^2)f'(t_d)^2>0.$
For $d=26$ and $\alpha\in(-(d-2),0)$ to be chosen, we define $$V(x)=c_d^{-1/2}\left(Z(x)-|x|^\alpha g_\alpha(\widetilde P(x))\right)_+,$$
where \be\label{eq.def:galp}g_\alpha(t):={}_2F_1\left(\frac{\alpha+d-2}6,-\frac\alpha6;\frac12;t^2\right).\ee 
Notice that, since $\alpha<1$, we have that $V_\rho(x):=\rho^{-1}V(\rho x)\to U$ as $\rho\to+\infty$. 
Additionally, since $\alpha\in(-(d-2),0)$, we have that  $g_\alpha>0$ in $(-1,1)$, by definition of the hypergeometric function $g_\alpha$ (see \eqref{def:hyper}). Thus, $V\le U$ in $\R^d$ and $V\equiv0$ in a neighborhood of the origin. In particular, $\partial\Omega_V\cap \{0\}=\emptyset$. Then, in order to prove that $V$ satisfies \cref{lemma:subsol-and-supersol}, we only need to show that it is a strict subsolution.

By definition of $g_\alpha$, we have that (see \cref{lemma:hypergeometric-appendix})
\bea\begin{cases}
    9(1-t^2)g_\alpha''(t)-3(d+1)tg_\alpha'(t)+\alpha(\alpha+d-2)g_\alpha(t)=0\quad\text{in } (-t_d,t_d),\\
    g_\alpha(0)=1,\  g_\alpha'(0)=0.
\end{cases}\eea
Then, by \cref{lemma:generallemma}, $V$ is harmonic in $\{V>0\}$. We now compute the gradient of $V$. We use that, for $t=\widetilde P(x)$, \begin{align*}
\nabla (|x|^\alpha \phi(t))&=\alpha x|x|^{\alpha-2}\phi(t)+|x|^\alpha\nabla(\phi(t))=\alpha x|x|^{\alpha-2}\phi(t)+|x|^\alpha\phi'(t)\nabla\widetilde P(x).
\end{align*} 
Therefore,
\begin{align*}c_d^{1/2}\nabla V(x)&=\nabla Z(x)-\nabla(|x|^\alpha g_\alpha(t))\\&=f(t)\frac{x}{|x|}+|x|f'(t)\nabla \widetilde P-\alpha x|x|^{\alpha-2}g_\alpha(t)-|x|^\alpha g_\alpha'(t)\nabla\widetilde P.\end{align*}
Using that $\nabla\widetilde P\cdot x=0$ and $|\nabla \widetilde P|^2=9|x|^{-2}(1-t^2)$, by \eqref{e:Cartan-zero-homogeneous-gradient-norm}, we get
$$c_d|\nabla V(x)|^2=(f(t)-\alpha |x|^{\alpha-1}g_\alpha(t))^2+9(1-t^2)\left(f'(t)-|x|^{\alpha-1}g_\alpha'(t)\right)^2.$$ Since $f(t)=|x|^{\alpha-1}g_\alpha(t)$ on $\partial\Omega_V$, we obtain 
$$|\nabla V(x)|^2=c_d^{-1}G(\alpha,t)\quad\text{on }\partial\Omega_V,$$ where \be\label{eq:Galophat}G(\alpha,t):=(1-\alpha)^2f(t)^2+9(1-t^2)\left(f'(t)-\frac{f(t)}{g_\alpha(t)}g_\alpha'(t)\right)^2.\ee
Since $G(\alpha,t_d)=c_d$, we need to choose $\alpha\in(-(d-2),0)$ such that $$G(\alpha,t)> G(\alpha,t_d)\quad\text{for every }t\in(-t_d,t_d).$$ 
Numerically (we used Mathematica), one can check that $\alpha:=-16$ works, as shown in the figure below.
\begin{figure}[H]
  \centering
  \includegraphics[width=0.45\textwidth]{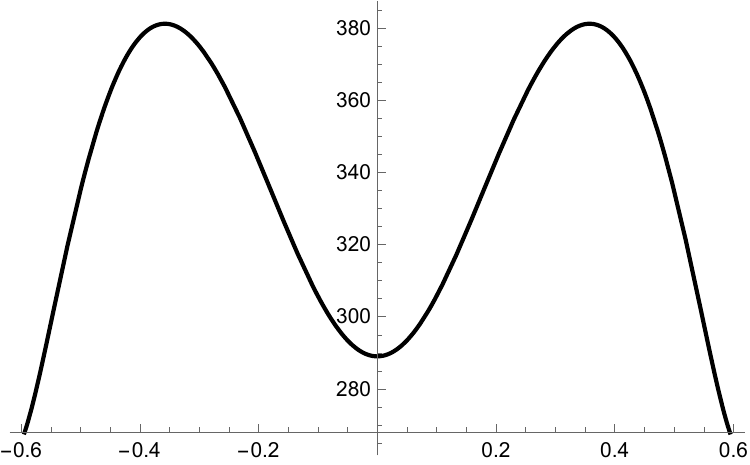}
  \caption{The graph of $G(\alpha,t)$ for $t\in [-t_d,t_d]$.}
\end{figure}

\subsection{Construction of the supersolution}\label{subsec:super} The goal of this subsection is to construct a strict supersolution satisfying \cref{lemma:subsol-and-supersol}, for $U:=U_{26}$. 
Throughout the subsection, we use the following coordinates adapted to the level sets of $\widetilde P$. 
We set
$$\rho:=|x|,\qquad t:=\widetilde P(x),\qquad
    \theta:=\frac13\arcsin t,$$
so that $t=\sin(3\theta)$, and then define
$$s:=\rho\sin\theta,\qquad r:=\rho\cos\theta.$$

The construction of the supersolution is inspired by \cite{DeSilvaJerison09:SingularConesIn7D}, but the structure of the Cartan cubic polynomials introduces an additional difficulty. Indeed, 
since $|t|\le 1$ by \cref{lemmaminmax}, then $|\theta|\le \pi/6$, and so a point in the coordinates $(s,r)$ corresponds to points in $\R^d$ if and only if it belongs to the set $$\mathcal{D}:=\left\{(s,r): |s|\le \frac{r}{\sqrt3}\right\}.$$
Along the lines $s=\pm r/\sqrt{3}$, we have $t=\pm 1$, and the change of coordinates $\theta=\frac13\arcsin t$  becomes singular. 
We construct the supersolution $W$ of the form
\begin{equation}\label{eq:def-W}W(x):=\begin{cases}
    W_1(x)&\quad\text{for $r\in[1,+\infty)$},\\
    W_2(x)&\quad\text{for $r\in[r_\ast,1]$},\\
    W_3(x)&\quad\text{for $r\in[0, r_\ast]$},\\
\end{cases}\end{equation}
for some $r_\ast\in(0,1)$ to be chosen later. 
At $r=1$, we allow a jump of the normal derivative, as in \cite{DeSilvaJerison09:SingularConesIn7D}, while at $r=r_\ast$, we require that the gluing is $C^1$.

\subsubsection{Derivatives in the new coordinates}
Let us start by computing the gradient and the Laplacian in the coordinates $(s,r)$. Recalling the identities in \eqref{e:Cartan-zero-homogeneous-gradient-norm}, we have $$\Delta \rho=\frac{d-1}{\rho},\qquad\Delta\theta=-\frac{d-2}{\rho^2}\tan(3\theta),\qquad|\nabla\theta|^2=\frac1{\rho^{2}},\qquad|\nabla \rho|^2=1,\qquad\nabla\rho\cdot\nabla\theta=0.$$ Moreover, $$ |\nabla s|^2=|\nabla r|^2=1,\qquad\nabla s\cdot\nabla r=0.$$ Additionally, the Laplacians of $s$ and $r$ are given by
$$\Delta s=\frac{d-2}{\rho}(\sin \theta-\cos\theta\tan3\theta)=\frac{d-2}{\rho^2}\left(s-r\frac{2sr^2+(r^2-s^2)s}{(r^2-s^2)r-2s^2r}\right)=-\frac{2(d-2)s}{r^2-3s^2},$$
$$\Delta r=\frac{d-2}{\rho}(\cos \theta+\sin\theta\tan3\theta)=\frac{d-2}{\rho^2}\left(r+s\frac{2sr^2+(r^2-s^2)s}{(r^2-s^2)r-2s^2r}\right)=\frac{(d-2)(r^2-s^2)}{r(r^2-3s^2)}.$$
Therefore, $W(x)=w(s,r)$ is a strict supersolution if  $$\mathcal{L}w:=\partial_{ss}w+\partial_{rr}w-\frac{2(d-2)s}{r^2-3s^2}\partial_sw+\frac{(d-2)(r^2-s^2)}{r(r^2-3s^2)}\partial_rw\le0\quad\text{in }\Omega_W$$ and $$
    (\partial_rw)^2+(\partial_s w)^2<1\quad\text{on }\partial\Omega_W.$$
    Note that, in the interior of $\mathcal{D}$, the operator $\mathcal{L}$ is well-defined.

\subsubsection{Definition of $W$}
We construct the supersolution $W$ as in \eqref{eq:def-W}.
We will choose $W_i(x)=w_i(s,r)$, $i=1,2,3$, for some functions $w_i$. Then, we need to prove the following conditions.
\begin{itemize}
    \item The functions $w_1$, $w_2$ and $w_3$ are superharmonic for $\mathcal{L}$ in their positivity set.
    \item The gradients of $w_1$ and $w_2$ are strictly less than 1 on their free boundaries, while $w_3$ will be chosen positive for $r\in(0,r_\ast)$, and thus it does not have free boundary.
    \item The derivative in the $r$ direction of $w_2-w_1$ must be non-negative. This ensures that the gluing of $W_1$ and $W_2$ is superharmonic in its positivity set. On the other hand, we will choose $w_2$ and $w_3$ in such a way that the gluing of $W_2$ and $W_3$ is $C^1$, implying that it is also superharmonic in its positivity set.
\end{itemize}

    We recall the function $Z$ defined in \eqref{eq:def-z}.
    In the coordinates $(\rho,t)$, we have $Z(x)=\rho f(t)$. 
    The positivity set of $U$ is given by $\{|t|<t_d\}$, and $t_d\approx 0.594$. In the coordinates $(s,r)$, the positivity set is $$\Omega_U=\{(s,r): |s|< s_0 r \}\subset \mathcal{D},\quad\text{where}\quad s_0:=\tan\left(\frac13\arcsin t_d\right)\approx 0.215.$$
    Let $\tau\in(t_d,1)$ to be chosen later. In order to construct the first function $W_1$, we want to enlarge the positivity set of $U$ as follows. We fix 
    $$\overline s:=\tan\left(\frac13\arcsin \tau\right)>s_0\quad\text{and}\quad \overline\rho:=\sqrt{\overline s^2+1},$$ so that the enlarged cone is $|s|\le \overline s r$.
    Then, for $\alpha\in(-(d-2),0)$ to be chosen later, 
    we define for $r\ge 1$ $$W_1(x):=w_1(s,r)\quad\text{where}\quad w_1(s,r):=\begin{cases}
        c_d^{-1/2}\left(\rho f(t)+\lambda \rho^\alpha g_\alpha(t)\right)_+&\quad\text{for }|s|<\overline sr,\\
        0&\quad\text{otherwise}
    \end{cases},$$ where $g_\alpha$ is given by \eqref{eq.def:galp}, $\lambda:=-\overline\rho^{1-\alpha}\frac{f(\tau)}{g_\alpha(\tau)}$, and $(\rho,t)$ in terms of $(s,r)$ are given by $$\rho=\sqrt{s^2+r^2},\qquad t=\sin\left(3\arctan\left (\frac sr\right)\right).$$ 
    We notice that since $\tau>t_d$, the monotonicity of $f$ (see \eqref{eq:deriv}) implies that $f(\tau)<0$ and so, $\lambda>0$.
    Moreover, since \be\label{eq:dopo1}w_1(\overline s r,r)=c_d^{-1/2}\big(\overline\rho f(\tau)(r-r^\alpha)\big)_+=0\quad\text{for every }r\ge1,\ee the function $w_1$ is continuous.
    The function $h(s):=w_1(s,1)$ satisfies
    \begin{equation}\label{eq:def-h}
       h>0\quad\text{in }(-\overline s,\overline s)\qquad\text{and}\qquad h(\pm\overline s)=0. 
    \end{equation} 
    We also extend $h=0$ outside $[-\overline s,\overline s]$.
    For functions $y(r)$ and $v_2(r)$ to be chosen, we define for $r\in [r_\ast,1]$ $$W_2(x):=w_2(s,r)\quad\text{where}\quad w_2(s,r)= y(r)h\left(\frac{s}{v_2(r)}\right),$$
     and for $r\in(0,r_\ast]$ $$W_3(x):=w_3(s,r)\quad\text{and}\quad w_3(s,r)= y(r_\ast)h\left(\frac{s}{v_3(r)}\right),\quad\text{where}\quad v_3(r):=C_\ast r^{-3} $$
    and $C_\ast>0$ will be chosen in \eqref{eq:choiceCast}. 
    The function $W$ is then defined as in \eqref{eq:def-W}.

    \subsubsection{Compatibility conditions} Now we list the compatibility conditions, according to the bullet points above. 
    
    First, in order to prove that $W$ satisfies \cref{lemma:subsol-and-supersol}, we claim that it is sufficient to show that it is a strict supersolution, $B_{r_\ast}\subset \Omega_W$ and \be\tag{C1}\label{eq:cond-add}\overline s v_2(r)>s_0 r\quad\text{for every }r\in[r_\ast,1).\ee 
    Indeed, suppose that these three properties hold. Since $W>0$ in a neighborhood of the origin and $U(0)=0$, then $W>U$ in a neighborhood of the origin. Moreover, $W=W_1\ge U$ for $r\ge 1$. On the other hand, since the positivity set of $U$ is given by $|s|<s_0 r$ and the positivity set of $W$ is $|s|< \overline s v_2(r)$ for $r\in[r_\ast,1)$, then \eqref{eq:cond-add} ensures that $\Omega_U\subset \Omega_W$ for $r\in[r_\ast,1)$. Moreover, since for $r\in(0,r_\ast)$, $W$ is positive, the same inclusion holds also for $r\in(0,r_\ast)$.
    Then, $W\ge U$ in $\R^d$ by the maximum principle.
    Moreover, for every $x\in\R^d$ and $\rho$ sufficiently large, $W_\rho(x):=\rho^{-1}W(\rho x)=\rho^{-1}W_1(\rho x)\to U$ as $\rho\to+\infty$. 
    This concludes the proof of the claim.

    Second, the fact that $W_1$ is harmonic in its positivity set follows by \cref{lemma:generallemma}. Then, we need to show the free boundary condition. The same computation in \cref{subsec:sub} implies that the free boundary condition is verified if \begin{equation}\tag{C2}\label{eq:cond1-supersol}G(\alpha,t)< G(\alpha,t_d)\quad\text{for every }t\in(t_d,\tau],\end{equation} where $G(\alpha,t)$ is defined in \eqref{eq:Galophat}. Note that the free boundary of $w_1$ lies in the cone $|s|\le \overline sr$, hence we need to verify \eqref{eq:cond1-supersol} only for $t\le \tau$.
    
    Furthermore, we need that the slope of the level set $\{w_1(s,r)=0\}$ at $(\overline s,1)$ is greater than the slope of $|s|= \overline s r$. This fact and \eqref{eq:dopo1} guarantee that the free boundary of $W_1$ lies strictly inside the open cone $|s|<\overline s r$ for $r>1$. Then, if \begin{equation}\label{eq:def-A}
        A(\overline s):=-\frac{\partial_s w_1(\overline s,1)}{\partial_r w_1(\overline s,1)},
    \end{equation} we need the condition \begin{equation}\tag{C3}\label{eq:cond2-supersol} A(\overline s)>\frac1{\overline s}.
    \end{equation}
    Notice that $y(r)$ must be non-negative, since $W_2$ cannot be negative. We ask that \begin{equation}\tag{C4}\label{eq:cond3-supersol}y(r)>0,\qquad\text{for every }r\in[r_\ast,1) \end{equation}
    Additionally, since $W$ must be continuous, we need $w_1(s,1)=w_2(s,1)$, and so we require that \begin{equation}\tag{C5}\label{eq:cond4-supersol}v_2(1)=y(1)=1.\end{equation}
    Notice that the positivity set of a function of the form $h(s/v(r))$ is given by $$\left\{(s,r)\in\mathcal{D}:\ |s|<\overline s v(r)\right\}.$$
    Then, it is convenient to impose that \begin{equation}\tag{C6}\label{eq:cond6-supersol} \overline s v_3(r)>\frac{r}{\sqrt{3}}\quad \forall r\in(0,r_\ast),\qquad \overline s v_2(r)<\frac{r}{\sqrt{3}}\quad\forall r\in(r_\ast,1],\qquad \overline s v_3(r_\ast)=\overline s v_2(r_\ast)=\frac{r_\ast}{\sqrt{3}}.\end{equation} 
    Note that the third identity in \eqref{eq:cond6-supersol} implies that \begin{equation}\label{eq:choiceCast}
        C_\ast=\frac{r_\ast^4}{\sqrt{3}\overline s},
    \end{equation} and thus $v_3(r)=\frac{r_\ast^4}{\sqrt{3}\overline s}r^{-3}.$
    Moreover, \eqref{eq:cond6-supersol} ensures that the free boundary of $W_2$ is $|s|=\overline s v(r)$ for $r\in(r_\ast,1)$, while $W_3$ is positive for $r\in(0,r_\ast)$. 
    Indeed, since $ |s|\le r/\sqrt{3}$, this follows by \be\label{eq:prima}\left|\frac{s}{v_3(r)}\right|\le \frac{r/\sqrt{3}}{r_\ast^4/(\sqrt{3}\overline s)r^{-3}}\le \overline s\left(\frac{r}{r_\ast}\right)^4< \overline s\quad\text{for every }r\in(0,r_\ast),\ee and by \eqref{eq:def-h}.
    In particular, $B_{r_\ast}\subset\Omega_{W}$. Indeed, if $x\in B_{r_\ast}$, then $r\le |x|< r_\ast$, and thus $W_3(x)>0$. 
    
    Additionally, we ask that the two free boundaries of $W_1$ and $W_2$ have the same tangent, and thus \begin{equation}\tag{C7}\label{eq:cond5-supersol}v_2'(1)=\frac{1}{A(\overline s)\overline s}.\end{equation}
    Finally, we require the gluing of $W_2$ and $W_3$ to be $C^1$, and so we impose that \begin{equation}\tag{C8}\label{eq:cond7-supersol}  y'(r_\ast)=0,\qquad v_2(r_\ast)=\frac{r_\ast}{\sqrt{3}\overline s},\qquad v'_2(r_\ast)=-\frac{\sqrt 3}{\overline s}.\end{equation} Notice that this last condition also ensures that the free boundary $s=\pm \overline s v(r)$ meets the lines $s=\pm r/\sqrt 3$ orthogonally at $r=r_\ast$. Indeed, the free boundary ends in $r=\pm \sqrt 3 s$, whose tangent is $(1,\pm \sqrt 3)$. On the other hand, the free boundary $s=\pm \overline s v(r)$ has tangent $(\pm \overline s v'(r_\ast),1)$. Then we obtain the last condition above.
    Notice that this condition is the analogue of $v'(0)=0$ in the construction in \cite{DeSilvaJerison09:SingularConesIn7D}, which follows from the fact that $v$ is even in $r$.

    \subsubsection{The operator $\mathcal{L}$}
    We now compute the operator $\mathcal{L}$ for a function of the form $w(s,r)=y(r)h(\xi)$, where $\xi:=s/v(r)\in(-\overline s,\overline s)$. First, we compute the derivatives
    \begin{equation}\label{eq:deriv1}
         \partial_sw=y\frac{h'}{v},\qquad\partial_rw=y'{h}-yh's\frac{v'}{v^2},
    \end{equation}
    $$\partial_{ss}w=y\frac{h''}{v^2},\qquad \partial_{rr}w=y''h-2y'h's\frac{v'}{v^2}+yh''s^2\frac{v'^2}{v^4}-ys\left(\frac{v''}{v^2}-\frac{2(v')^2}{v^3}\right)h'.$$
    Then
    \begin{align*}\mathcal{L}w&=y\frac{h''}{v^2}+y''h-2y'h's\frac{v'}{v^2}+yh''s^2\frac{(v')^2}{v^4}-ys\left(\frac{v''}{v^2}-\frac{2(v')^2}{v^3}\right)h'\\&\qquad-\frac{2(d-2)s}{r^2-3s^2}y\frac{h'}{v}+\frac{(d-2)(r^2-s^2)}{r(r^2-3s^2)}\left(y'{h}-yh's\frac{v'}{v^2}\right)
    \\&=\left(1+s^2\frac{(v')^2}{v^2}\right)\frac{y}{v^2}h''\\&\qquad+\left(-2\frac{y'}{y}\frac{v'}{v}-\left(\frac{v''}{v}-\frac{2(v')^2}{v^2}\right)-\frac{2(d-2)}{r^2-3s^2}-\frac{(d-2)(r^2-s^2)}{r(r^2-3s^2)}\frac{v'}{v}\right)y\frac{s}{v}h'\\&\qquad\qquad+\left(y''+\frac{(d-2)(r^2-s^2)}{r(r^2-3s^2)}y'\right)h.
    \end{align*}
    Therefore, \begin{equation}\label{eq:mathcalL}
        \mathcal{L}w={y(r)} \Big(D(\xi,r)h''(\xi)-M(\xi,r)\xi h'(\xi)+N(\xi,r)h(\xi)\Big),
    \end{equation}
    where $$D(\xi,r):=\frac{1}{v(r)^2}+\frac{v'(r)^2}{v(r)^2}\xi^2,$$
    $$M(\xi,r):=2\frac{y'(r)}{y(r)}\frac{v'(r)}{v(r)}+\frac{v''(r)}{v(r)}-2\frac{v'(r)^2}{v(r)^2}+\frac{2(d-2)}{r^2-3v(r)^2\xi^2}+\frac{(d-2)(r^2-v(r)^2\xi^2)}{r(r^2-3v(r)^2\xi^2)}\frac{v'(r)}{v(r)},$$
    $$N(\xi,r):=\frac{y''(r)}{y(r)}+\frac{(d-2)(r^2-v(r)^2\xi^2)}{r(r^2-3v(r)^2\xi^2)}\frac{y'(r)}{y(r)}.$$
    \subsubsection{Equation of $W_3$}
    In order to prove that $w_3$ is a supersolution of the one-phase problem, we only need to show that $w_3$ is a supersolution for $\mathcal{L}$, since there is no free boundary for $r\in[0,r_\ast)$. Since $v'(r)/v(r)=-3/r$ and $v''(r)/v(r)=12/r^2$, then, using \eqref{eq:mathcalL} with $v=v_3$, we have
$$D(\xi,r)\ge0,\qquad M(\xi,r)=\frac{12}{r^2}-\frac{18}{r^2}-\frac{d-2}{r^2}=-\frac{d+4}{r^2}\le0,\qquad N(\xi,r)=0.$$
Moreover, as in \eqref{eq:prima}, $|\xi|< \overline s$ for every $r\in(0,r_\ast)$.
If we impose that 
    \begin{equation}\tag{C9}\label{eq:cond7.5-supersol} h''(\xi)\le0\quad\text{and}\quad \xi h'(\xi)\le0\quad\text{for every }\xi\in[-\overline s,\overline s],
    \end{equation} 
    then it follows that $$\mathcal{L}w_3\le0\quad\text{for every }r\in (0,r_\ast).$$
    We also observe that, using \eqref{eq:deriv1}, $$\partial_s w_3=y(r_\ast)\frac{h'(\xi)}{v_3(r)}\quad\text{and}\quad \partial_r w_3=y(r_\ast)h'(\xi)s\frac{3}{rv_3(r)}.$$ 
 Then, we have $$\partial_r w_3\mp\sqrt{3}\partial_s w_3=0\quad \text{on }s=\pm r/\sqrt{3},\quad\text{for every }r\in(0,r_\ast).$$ 
Notice that this is precisely the compatibility condition at the boundary of $\mathcal D$: it cancels the apparent singularity of the operator $\mathcal{L}$ along the lines $s=\pm r/\sqrt3$. It is the analogue of the condition $\partial_r w=0$ on the axis $r=0$ in the construction of \cite{DeSilvaJerison09:SingularConesIn7D}.
 
\subsubsection{Equation of $W_2$}
In order to prove that $w_2$ is a supersolution for $r\in (r_\ast,1)$, we choose $v_2(r)$ and $y(r)$.
By \eqref{eq:cond4-supersol}, \eqref{eq:cond5-supersol}, \eqref{eq:cond7-supersol}, we have four conditions on $v(r)$. Since we need a free parameter, we seek $v$ of the form $$v_2(r):=\mu+c_2r^2+c_4r^4+c_6r^6+c_8 r^8,$$ for some $\mu\in\R$ to be chosen.
Given $\mu$, we define $(c_2,c_4,c_6,c_8)$ as
$$\begin{pmatrix}
c_2 \\
c_4 \\
c_6 \\
c_8
\end{pmatrix}
:=
\begin{pmatrix}
1 & 1 & 1 & 1 \\
2 & 4 & 6 & 8 \\
r_\ast^2 & r_\ast^4 & r_\ast^6 & r_\ast^8 \\
2 r_\ast & 4 r_\ast^3 & 6 r_\ast^5 & 8 r_\ast^7
\end{pmatrix}^{-1}
\begin{pmatrix}
1 - \mu \\
1/(A(\overline{s})\overline{s}) \\
r_\ast/(\overline{s}\sqrt{3}) - \mu \\
-\sqrt{3}/\overline{s}
\end{pmatrix}.$$
Next we compute the free boundary condition. On the free boundary $|s|=\overline s v_2(r)$, we have $\xi=\pm\overline s$. In particular, $h(\xi)=0$. Then, using \eqref{eq:deriv1}, we have, for $v=v_2$
$$(\partial_sw_2)^2+(\partial_rw_2)^2=y(r)^2h'(\overline s)^2D(\overline s,r).$$ Then, in order to show the free boundary condition, we need to check that \begin{equation}\tag{C10}\label{eq:cond8.5-supersol}
    y(r)^2h'(\overline s)^2D(\overline s,r)<1\quad\text{for every }r\in[r_\ast,1].
\end{equation}
Additionally, we also need the condition  $$\partial_r w_1(s,1)\le\partial_r w_2(s,1).$$ This ensures that the gluing of $W_1$ and $W_2$ is superharmonic in its positivity set, once we know that $W_1$ and $W_2$ are superharmonic in their positivity sets.
Using again \eqref{eq:deriv1}, together with \eqref{eq:cond4-supersol} and \eqref{eq:cond5-supersol}, this last condition is equivalent to requiring that $$J(s):=\partial_r w_1(s,1)-\partial_r w_2(s,1)=\partial_r w_1(s,1)-y'(1)h(s)+\frac{h'(s)s}{A(\overline s)\overline s}\le0 \quad\forall s\in[0,\overline s].$$
By \eqref{eq:def-h}, \eqref{eq:def-A}, and since $h'(s)=\partial_s w_1(s,1)$, we have $J(\pm \overline s)=0$. Moreover, we also require that $J(0)=0$. Since $h'(0)=0$, this last condition is equivalent to
\begin{equation}\tag{C11}\label{eq:cond9-supersol}
    y'(1)=\frac{\partial_r w_1(0,1)}{h(0)}.
\end{equation}
Then, it is sufficient to show that \begin{equation}\tag{C12}\label{eq:cond10-supersol} J(s)\le0\quad\text{for every }s\in(0,\overline s).
\end{equation}
For $y(r)$, we have the three conditions \eqref{eq:cond4-supersol}, \eqref{eq:cond7-supersol} and \eqref{eq:cond9-supersol}. We seek $y(r)$ of the form 
$$y(r)=a_0+a_1r+a_2r^2,$$
 for some $a_0,a_1,a_2\in\R$.
Precisely, we set
$$\begin{pmatrix}
a_0 \\
a_1 \\
a_2 \\
\end{pmatrix}
:=
\begin{pmatrix}
1 & 1 & 1  \\
0 & 1 & 2r_\ast  \\
0 & 1 & 2  
\end{pmatrix}^{-1}
\begin{pmatrix}
1 \\
0 \\
\partial_r w_1(0,1)/h(0) 
\end{pmatrix}.$$

On the positivity set of $W_2$, namely where $h$ is positive, we have $\xi\in (-\overline s,\overline s)$. Then, the condition $\mathcal{L}w_2\le0$ for $r\in(r_\ast,1)$ is satisfied if we show that, for $v=v_2$ \be\tag{C13}\label{eq:cond11-supersol}F(\xi,r):=D(\xi,r)h''(\xi)-M(\xi,r)\xi h'(\xi)+N(\xi,r)h(\xi)\le0 \quad\text{for every }(\xi,r)\in \left(0,\overline s\right)\times (r_\ast,1),\ee 
where we restrict the interval of $\xi$ to $(0,\overline s)$, since $F$ is even in $\xi$.

\subsubsection{The choice of the parameters}
We choose $$\alpha:=-12, \qquad\tau:=0.62,\qquad r_\ast:=0.68,\qquad \mu:=10,$$
and we need to verify all the conditions listed
above.
\begin{itemize}
     \item \eqref{eq:cond4-supersol}, \eqref{eq:cond5-supersol}, \eqref{eq:cond7-supersol}, \eqref{eq:cond9-supersol} hold by definition of $v(r)$ and $y(r)$.
     \item \eqref{eq:cond-add} holds, by the figure below.
\begin{figure}[H]
  \centering
  \includegraphics[width=0.45\textwidth]{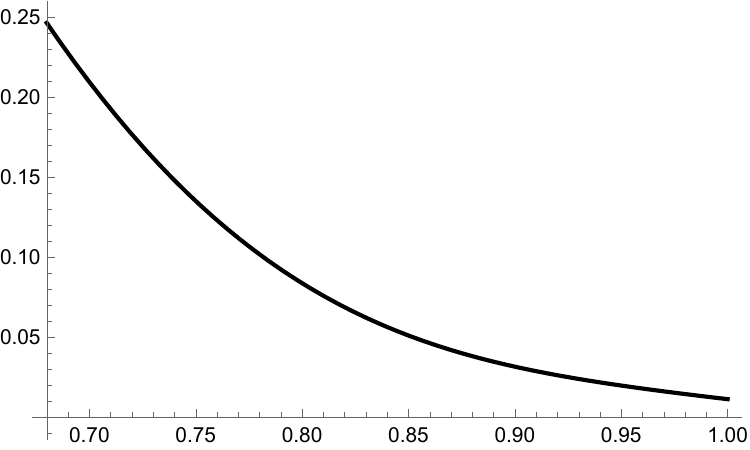}
  \caption{The graph of $\overline s v_2(r)-s_0r$ for $r\in [r_\ast,1)$.}
\end{figure}
    \item \eqref{eq:cond1-supersol} holds, by the figure below. \begin{figure}[H]
  \centering
  \includegraphics[width=0.45\textwidth]{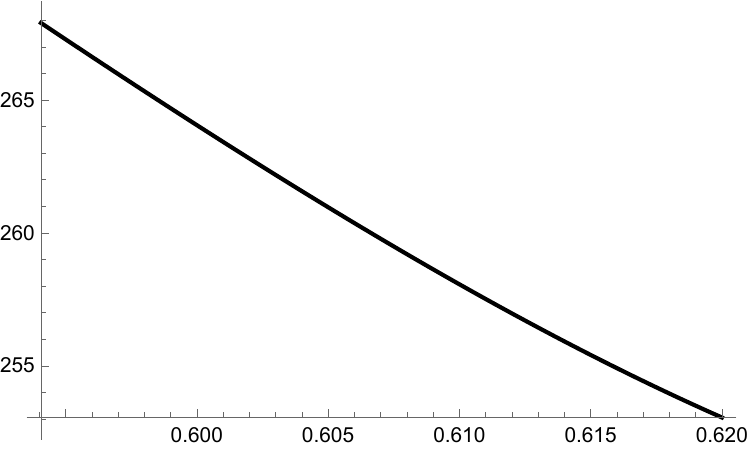}
  \caption{The graph of $G(\alpha,t)$ for $t\in [t_d,\tau]$.}
\end{figure}
\item \eqref{eq:cond2-supersol} holds, since $A(\overline s)\approx 16.26$ and $1/\overline s\approx 4.41$.
\item \eqref{eq:cond3-supersol} holds, by the figure below.
\begin{figure}[H]
  \centering
  \includegraphics[width=0.45\textwidth]{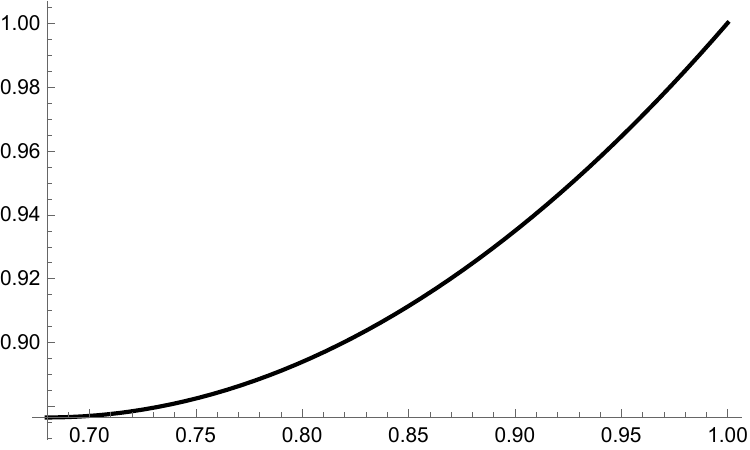}
  \caption{The graph of $y(r)$ for $r\in [r_\ast,1]$.}
\end{figure}

\item \eqref{eq:cond6-supersol} holds. Indeed, by definition of $v_3$, we have that $\overline sv_3(r)>\frac{r}{\sqrt{3}}$ for every $r\in (0,r_\ast)$. Moreover, the last condition of \eqref{eq:cond6-supersol} follows by definition of $v_3$ and \eqref{eq:cond7-supersol}. Finally, $\overline sv_2(r)<\frac{r}{\sqrt{3}}$ for every $r\in (r_\ast,1]$, by the figure below. 
\begin{figure}[H]
  \centering
  \includegraphics[width=0.45\textwidth]{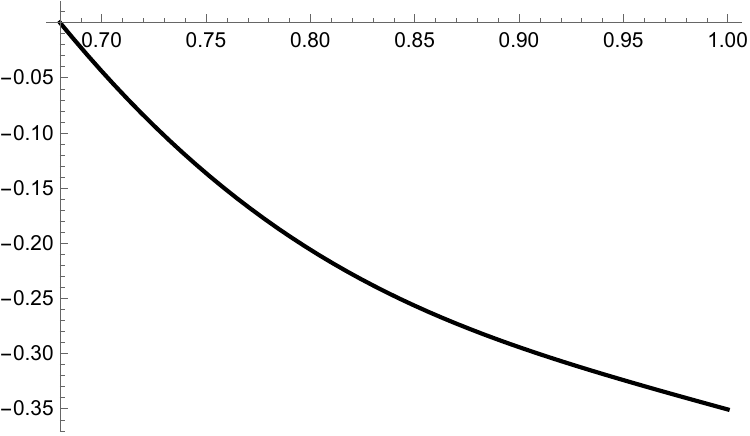}
  \caption{The graph of $\overline sv_2(r)-\frac{r}{\sqrt{3}}$ for $r\in [r_\ast,1]$.}
\end{figure}
\item \eqref{eq:cond7.5-supersol} holds, by the figures below.
\begin{figure}[H]
  \centering
  \includegraphics[width=0.45\textwidth]{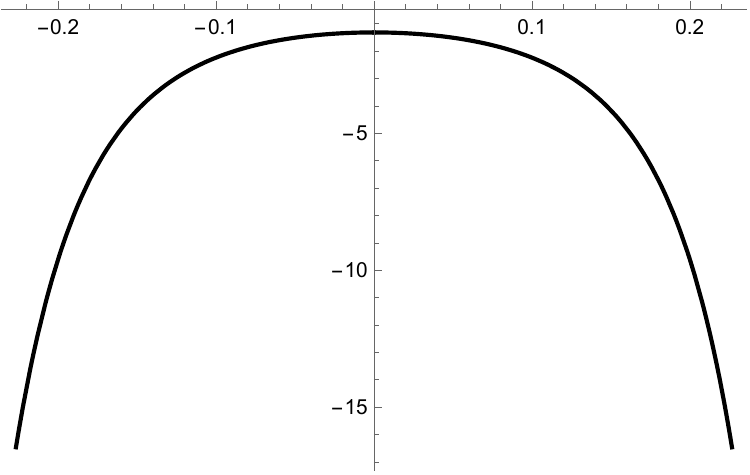}
  \caption{The graph of $h''(\xi)$ for $\xi\in [-\overline s,\overline s]$.}
\end{figure}
\begin{figure}[H]
  \centering
  \includegraphics[width=0.45\textwidth]{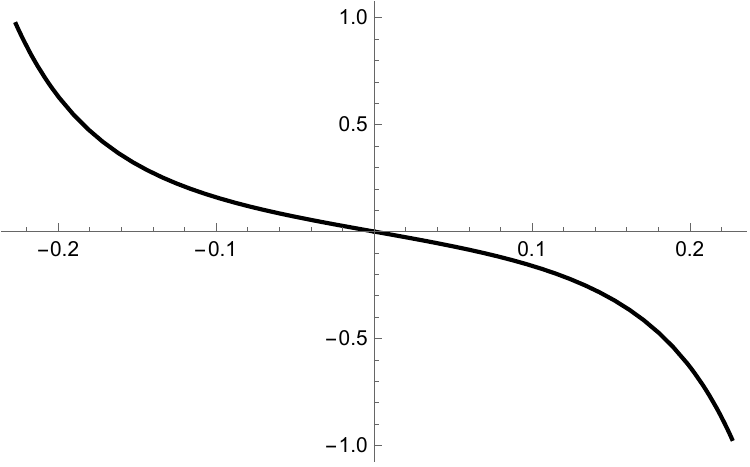}
  \caption{The graph of $h'(\xi)$ for $\xi\in [-\overline s,\overline s]$.}
\end{figure}

\item \eqref{eq:cond8.5-supersol} holds, by the figure below. Note that, numerically, if $B(r):=y(r)^2h'(\overline s)^2D(\overline s,r)$, we have $\sup_{r\in (r_\ast,1)}B(r)=B(r_\ast)\approx 0.964<1$.
\begin{figure}[H]
  \centering
  \includegraphics[width=0.45\textwidth]{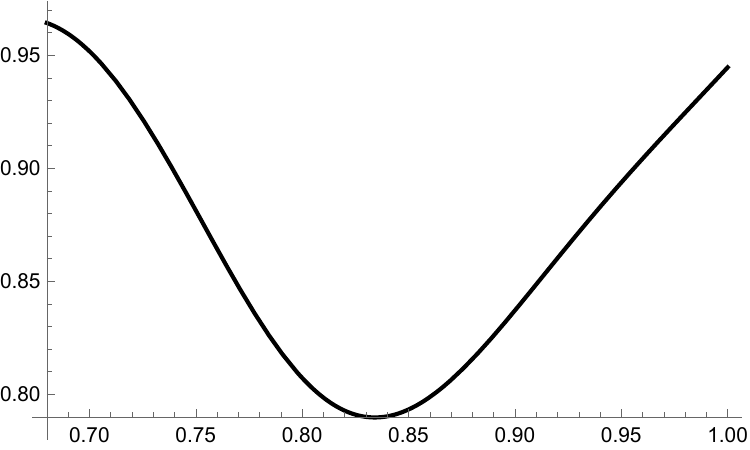}
  \caption{The graph of $B(r)$ for $r\in [r_\ast,1]$.}
\end{figure}

\item \eqref{eq:cond10-supersol} holds, by the figure below. 
   \begin{figure}[H]
  \centering
  \includegraphics[width=0.45\textwidth]{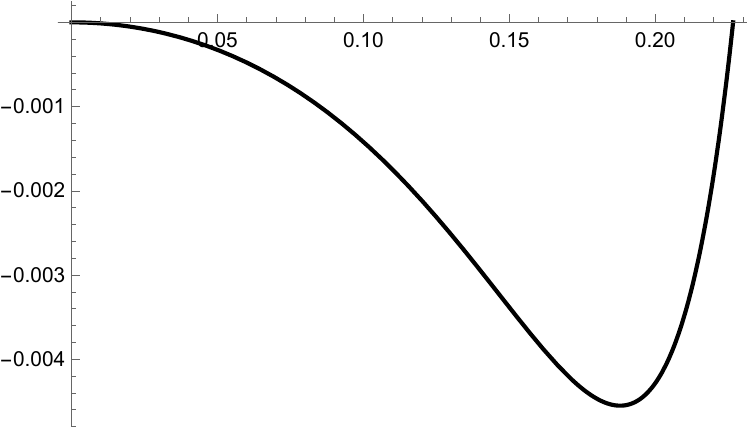}
  \caption{The graph of $J(s)$ for $s\in [0,\overline s]$.}
\end{figure}
\end{itemize} 
\subsubsection{Proof of $F(\xi,r)\le0$}
Finally, we need to prove \eqref{eq:cond11-supersol}. Since it is a two dimensional inequality, we simplify the proof using the following strategy. Let $m_1,m_2,m_3,m_4>0$ to be chosen later such that \be\label{eq:claim-bounds}\frac{M(\xi,r)}{D(\xi,r)}\le m_1+m_2\xi^2\quad\text{and}\quad\frac{N(\xi,r)}{D(\xi,r)}\le m_3+m_4\xi^2\quad\text{for every }(\xi,r)\in \left(0,\overline s\right)\times (r_\ast,1).\ee
Since $h(\xi)\ge0$, $\xi h'(\xi)\le 0$, then \eqref{eq:cond7.5-supersol} gives $$F(\xi,r)\le D(\xi,r)K(\xi),\quad \text{where}\quad K(\xi)=h''(\xi)-(m_1+m_2\xi^2)\xi h'(\xi)+(m_3+m_4\xi^2)h(\xi).$$ Therefore, in order to prove \eqref{eq:cond11-supersol}, we need to show \eqref{eq:claim-bounds} and \be\label{eq:claim2}K(\xi)\le 0\quad\text{for every }\xi\in(0,\overline s).\ee 
We notice that larger parameters $m_i$, with $i=1,2,3,4$, yield a simpler estimate \eqref{eq:claim-bounds}, but the function $K$ increases and could be positive, contradicting \eqref{eq:claim2}.
Then, the two inequalities \eqref{eq:claim-bounds} and \eqref{eq:claim2} are in competition, and we need to find a suitable choice of the parameters $m_i$ such that they hold at the same time.

Multiplying by $v(r)^2y(r)r Q(\xi,r)$, for $Q(\xi,r):=r^2-3v(r)^2\xi^2$, the inequalities in \eqref{eq:claim-bounds} are equivalent to 
\be\label{eq:eq1111}\begin{aligned}P_M(\xi,r)&:=\Big(2y'(r)v'(r)v(r)+v''(r)v(r)y(r)-2v'(r)^2y(r)\Big)rQ(\xi,r)\\&\qquad+2(d-2)v(r)^2y(r)r+(d-2)(r^2-v(r)^2\xi^2){v'(r)}v(r)y(r)\\&\qquad\qquad- (m_1+m_2\xi^2)y(r)r\left(1+v'(r)^2\xi^2\right)Q(\xi,r) \le0\end{aligned}\ee
\be\label{eq:eq1112}\begin{aligned}P_N(\xi,r)&:=\Big(r{y''(r)}Q(\xi,r)+(d-2)(r^2-v(r)^2\xi^2)y'(r)\Big)v(r)^2\\&\qquad- (m_3+m_4\xi^2)y(r)r\left(1+v'(r)^2\xi^2\right)Q(\xi,r)\le0.\end{aligned} \ee
For $\xi^2=\sigma$, these polynomials can be written as
$$P_M(\xi,r)=p_M(\sigma,r)=a_0^M(r)+a_1^M(r)\sigma+a_2^M(r)\sigma^2+a_3^M(r)\sigma^3,$$
$$P_N(\xi,r)=p_N(\sigma,r)=a_0^N(r)+a_1^N(r)\sigma+a_2^N(r)\sigma^2+a_3^N(r)\sigma^3.$$
for some $a_i^M(r)$ and $a_i^N(r)$, for $i=0,1,2,3$. More precisely, the coefficients corresponding to $i=2,3$ are
$$a^M_2(r):=r y(r)\big(3m_1v'(r)^2v(r)^2+3m_2v(r)^2-m_2r^2v'(r)^2\big),\quad a^M_3(r):=3m_2ry(r)v(r)^2v'(r)^2\ge0,$$
$$a^N_2(r):=r y(r)\big(3m_3v'(r)^2v(r)^2+3m_4v(r)^2-m_4r^2v'(r)^2\big),\quad a^N_3(r):=3m_4ry(r)v(r)^2v'(r)^2\ge0.$$
In particular, if we prove that $a^M_2(r)$ and $a^N_2(r)$ are non-negative on $(r_\ast,1)$, then the functions $\sigma\mapsto p_M(\sigma,r)$ and $\sigma\mapsto p_N(\sigma,r)$ are convex in $(0,\overline s^2)$ for every $r\in (r_\ast,1)$. Therefore, this should imply that
$$p_M(\sigma,r)\le (1-\sigma/\overline s^2)p_M(0,r)+\sigma/\overline s^2 p_M(\overline s^2,r),\quad p_N(\sigma,r)\le (1-\sigma/\overline s^2)p_N(0,r)+\sigma/\overline s^2 p_N(\overline s^2,r).$$
Thus, in order to prove \eqref{eq:eq1111} and \eqref{eq:eq1112}, we show that the two right-hand sides above are non-positive.
Recalling the definition of $p_M$ and $p_N$, then \eqref{eq:claim-bounds} holds if we choose parameters $m_i$ such that
\be\label{eq:eq111}a^M_2(r)\ge0,\quad P_M(0,r)\le0,\quad P_M(\overline s,r)\le0\quad\text{for every }r\in(r_\ast,1)\ee
\be\label{eq:eq112}a^N_2(r)\ge0,\quad P_N(0,r)\le0,\quad P_N(\overline s,r)\le0\quad\text{for every }r\in(r_\ast,1).\ee
To find suitable parameters $m_i$, we use the following strategy.
Given $m_2$ and $m_4$, we choose $m_1=m_1(m_2)$ and $m_3=m_3(m_4)$ by imposing \eqref{eq:eq1111} and \eqref{eq:eq1112}. Precisely, we take the supremum on the rectangle $(0,\overline s)\times (r_\ast,1)$ of a ratio of two polynomials. 
This allows to reformulate the problem in a choice of only two parameters $m_2$ and $m_4$.
We use the function NMaxValue in Mathematica, with DifferentialEvolution method. For $m_2=220$ and $m_4=50$, we find $m_1(m_2)\approx 62.55$ and $m_3(m_4)\approx 20.92$. Since it is convenient to increase the last two parameters slight, we choose the following parameters $$m_1:=63.2,\qquad m_2:=220,\qquad m_3:=21,\qquad m_4:=50.$$ 
This choice gives \eqref{eq:eq111}, \eqref{eq:eq112} and \eqref{eq:claim2}. The validity of all these estimates are shown in the following figures.
\begin{figure}[H]
  \centering
  \includegraphics[width=0.45\textwidth]{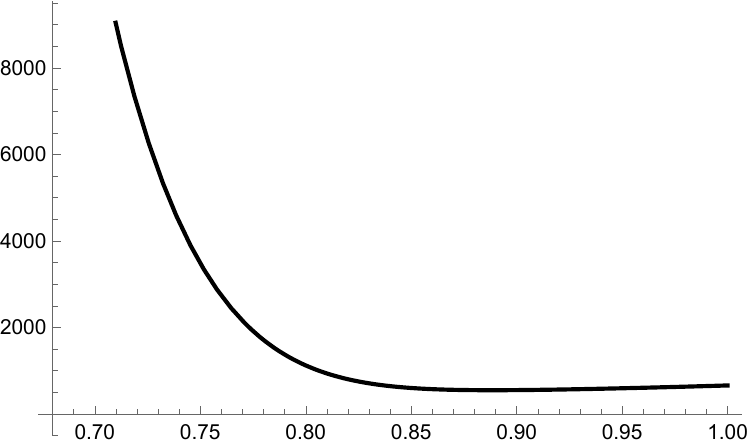}
  \caption{The graph of $a^M_2(r)$ for $r\in(r_\ast,1)$.}
\end{figure}
\begin{figure}[H]
  \centering
  \includegraphics[width=0.45\textwidth]{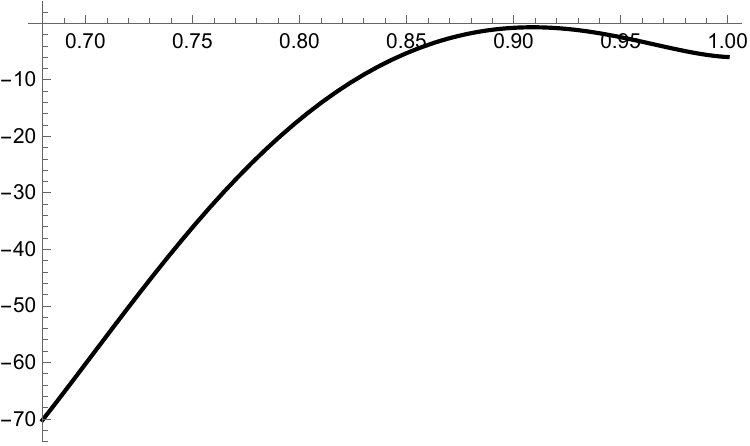}
  \caption{The graph of $P_M(0,r)$ for $r\in(r_\ast,1)$.}
\end{figure}
\begin{figure}[H]
  \centering
  \includegraphics[width=0.45\textwidth]{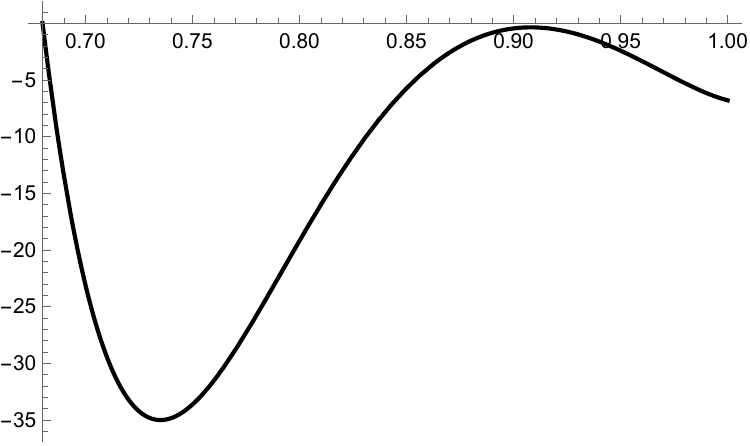}
  \caption{The graph of $P_M(\overline s,r)$ for $r\in(r_\ast,1)$.}
\end{figure}
\begin{figure}[H]
  \centering
  \includegraphics[width=0.45\textwidth]{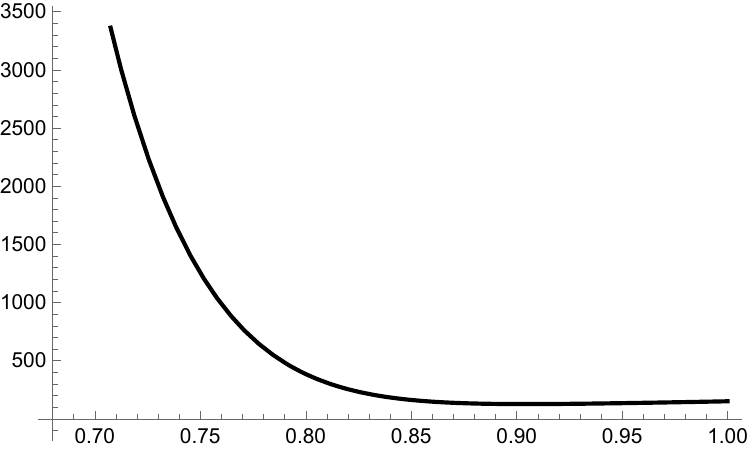}
  \caption{The graph of $a^N_2(r)$ for $r\in(r_\ast,1)$.}
\end{figure}
\begin{figure}[H]
  \centering
  \includegraphics[width=0.45\textwidth]{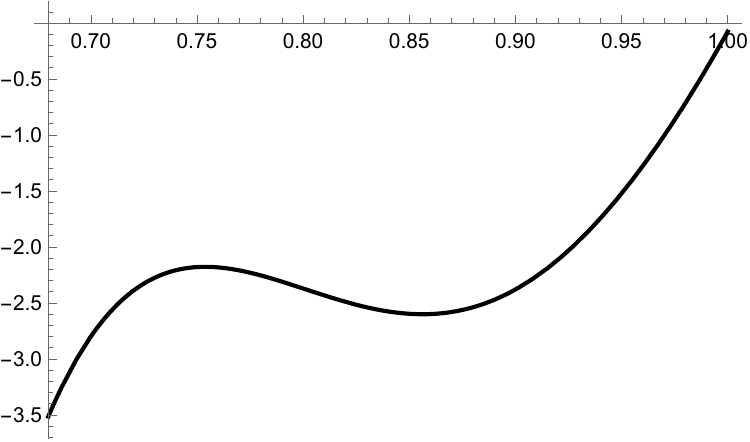}
  \caption{The graph of $P_N(0,r)$ for $r\in(r_\ast,1)$.}
\end{figure}
\begin{figure}[H]
  \centering
  \includegraphics[width=0.45\textwidth]{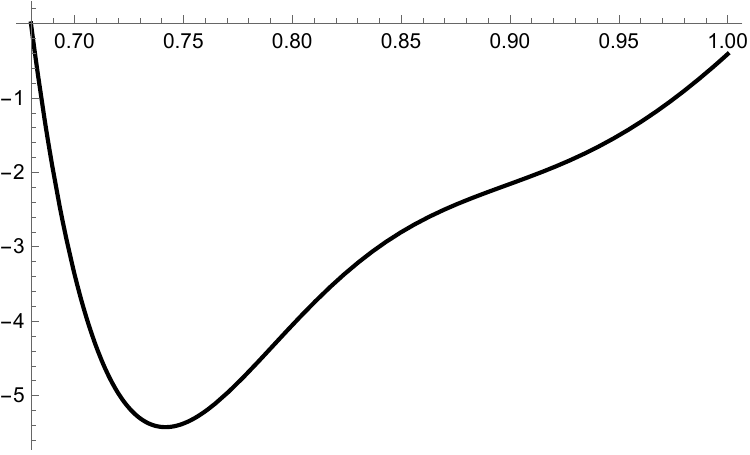}
  \caption{The graph of $P_N(\overline s,r)$ for $r\in(r_\ast,1)$.}
\end{figure}
\begin{figure}[H]
  \centering
  \includegraphics[width=0.45\textwidth]{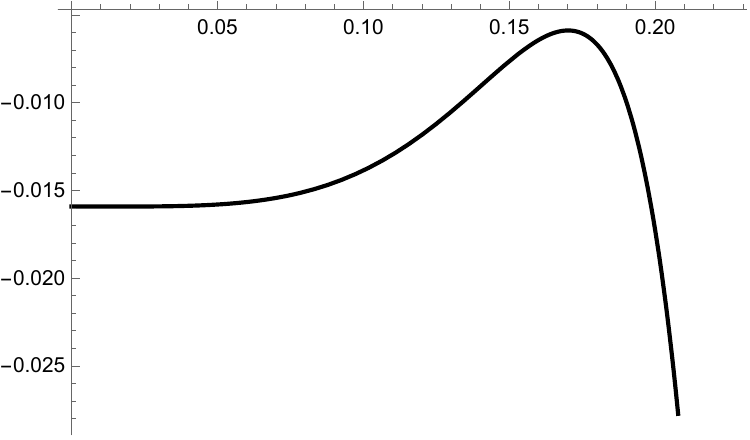}
  \caption{The graph of $K(\xi)$ for $\xi\in (0,\overline s)$.}
\end{figure}
\noindent More precisely, $$\inf_{r\in(r_\ast,1)}a^M_2(r)\approx 547.545>0,\ \sup_{r\in(r_\ast,1)}P_M(0,r)\approx -0.740<0 ,\ \sup_{r\in(r_\ast,1)}P_M(\overline s, r)=0,$$
$$\inf_{r\in(r_\ast,1)}a^N_2(r)\approx 126.925>0,\ \sup_{r\in(r_\ast,1)}P_N(0,r)\approx -0.077<0 ,\ \sup_{r\in(r_\ast,1)}P_N(\overline s, r)=0,$$
and $$\sup_{\xi\in (0,\overline s)}K(\xi)\approx -0.006<0.$$
This concludes the proof of \eqref{eq:claim-bounds} and \eqref{eq:claim2}, and thus the proof of \eqref{eq:cond11-supersol}.

\subsection{Conclusion of the proofs} Finally, we can conclude the proof of the minimality of $U_{26}$, and thus of our main result \cref{thm:main}.
\begin{proof}[Proof of \cref{prop:main2}]
    The result follows by \cref{lemma:subsol-and-supersol} and by the constructions of the subsolution and supersolution in \cref{subsec:sub} and \cref{subsec:super} respectively.
\end{proof}
\begin{proof}[Proof of \cref{thm:main}]
    The result follows by combining \cref{prop:main1} and \cref{prop:main2}.
\end{proof}

\appendix \section{Hypergeometric functions}

For $a,b,c\in\R$, with $c\not\in\{0,-1,-2,\ldots\}$, we recall that the hypergeometric function ${}_2F_1$ is defined as \be\label{def:hyper}{}_2F_1(a,b;c;s):=\sum_{k=0}^\infty \frac{(a)_k(b)_k}{(c)_k }\frac{s^k}{k!}\quad\text{for }s\in(-1,1),\ee where $(n)_k$ is the Pochhammer symbol defined as $(n)_0:=1$ and $(n)_k:=n(n+1)\cdots(n+k-1)$ for $k>0$. 
The derivative of the hypergeometric function ${}_2F_1$ is given by \be\label{eq:derivative-hyper}
\frac{d}{ds}{}_2F_1(a,b;c;s)=\frac{ab}{c}{}_2F_1(a+1,b+1;c+1;s).
\ee
Moreover, a straightforward term-by-term computation gives that $y(s):={}_2F_1(a,b;c;s)$ solves the ODE
\be\label{eq:odey}
    s(1-s)y''(s)+\left(c-(a+b+1)s\right)y'(s)-aby(s)=0\quad\text{in } (-1,1).
\ee

\begin{lemma}\label{lemma:hypergeometric-appendix}
    Let $A,B,C\in\R$ with $A\not=0$ and $(A+B)^2+4AC\ge0$. Then, the only solution of the ODE 
    \be\label{eq:sol-v}\begin{cases}
    A(1-t^2)h''(t)+Bth'(t)+Ch(t)=0\quad\text{in } (-1,1),\\
    h(0)=1,\  h'(0)=0,
\end{cases}\ee is given by the hypergeometric function
    \be\label{eq:def-h-geo}h(t)={}_2F_1\left(a,b;\frac12;t^2\right),\ee where \bea\label{eq:abc}a:=\frac{-(A+B)+\sqrt{(A+B)^2+4AC}}{4A},\quad b:=\frac{-(A+B)-\sqrt{(A+B)^2+4AC}}{4A}.\eea
\end{lemma}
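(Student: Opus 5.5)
The plan is to reduce \eqref{eq:sol-v} to the hypergeometric equation \eqref{eq:odey} through the substitution $s=t^2$, and then to use uniqueness for the initial value problem at the ordinary point $t=0$.

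\textbf{Step 1: the hypergeometric function solves \eqref{eq:sol-v}.} Set $h(t)=y(t^2)$, where $y(s)={}_2F_1(a,b;\tfrac12;s)$. Then
\[
h'=2ty'(t^2),\qquad h''=2y'(t^2)+4t^2y''(t^2).
\]
Substituting into \eqref{eq:sol-v} and writing $s=t^2$ gives
\[
4As(1-s)y''+\big(2A(1-s)+2Bs\big)y'+Cy=0 .
\]
Dividing by $4A$, which is allowed since $A\neq0$, yields
\[
s(1-s)y''+\Big(\tfrac12-\big(\tfrac12-\tfrac{B}{2A}\big)s\Big)y'+\tfrac{C}{4A}y=0 .
\]
This matches \eqref{eq:odey} with $c=\tfrac12$ provided
\[
a+b+1=\tfrac12-\tfrac{B}{2A}\qquad\text{and}\qquad ab=-\tfrac{C}{4A},
\]
that is, $a+b=-\tfrac{A+B}{2A}$ and $ab=-\tfrac{C}{4A}$.

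So $a$ and $b$ must be the roots of
\[
X^2+\tfrac{A+B}{2A}X-\tfrac{C}{4A}=0 .
\]
The discriminant of this quadratic is $\tfrac{(A+B)^2+4AC}{4A^2}\ge0$ by hypothesis, so the roots are real. They are
\[
X=\frac{-(A+B)\pm\sqrt{(A+B)^2+4AC}}{4A},
\]
which are exactly the stated $a$ and $b$. Since \eqref{eq:odey} holds for $s\in(-1,1)$ and $t^2\in[0,1)$, the function $h$ solves the ODE on $(-1,1)$. From the series \eqref{def:hyper} we get $h(0)={}_2F_1(a,b;\tfrac12;0)=1$, and $h'(0)=0$ because $h'(t)=2ty'(t^2)$.

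\textbf{Step 2: uniqueness.} On $(-1,1)$ the leading coefficient $A(1-t^2)$ never vanishes. The equation is therefore a regular linear second-order ODE with smooth coefficients, and the standard existence and uniqueness theorem for linear ODEs gives a unique solution with prescribed $h(0)$ and $h'(0)$ on the whole interval.

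\textbf{Main obstacle.} The only delicate point is in Step 1. The identity \eqref{eq:odey} is used at $s=t^2$, and in particular at $s=0$, where the factor $s(1-s)$ degenerates. This causes no difficulty: ${}_2F_1$ is a convergent power series for $|s|<1$, so $y$ is smooth at $s=0$ and the term-by-term identity holds for all $s\in(-1,1)$. Everything else is bookkeeping of the coefficients.
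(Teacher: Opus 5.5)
Your proposal is correct and follows the paper's proof. The substitution $h(t)=y(t^2)$ reduces the equation to the hypergeometric equation with $c=\tfrac12$, $a+b=-\frac{A+B}{2A}$ and $ab=-\frac{C}{4A}$, whose roots are the stated $a,b$; your Step 2 also makes explicit the uniqueness claim (via the nonvanishing leading coefficient $A(1-t^2)$ on $(-1,1)$), which the paper leaves implicit.
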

\begin{proof}
Given $a,b,c\in\R$ to be chosen later, we set $y(s):={}_2F_1(a,b;c;s)$ and  $h(t):=y(t^2)$. Since $h(0)=1$ and $h'(0)=0$, then $h$ solves \eqref{eq:sol-v} if 
$$s(1-s)y''(s)+\left(\frac12+\frac{B-A}{2A}s\right)y'(s)+\frac{C}{4A}y(s)=0.$$
This is exactly \eqref{eq:odey} if we choose $a,b,c\in\R$ such that $$c=\frac12,\quad a+b=-\frac{A+B}{2A},\quad ab=-\frac{C}{4A}.$$
Writing $a,b$ in terms of $A,B,C$, we obtain that the function $h$ defined in \eqref{eq:def-h-geo} solves \eqref{eq:sol-v}, concluding the proof.
\end{proof}

\bibliography{biblio_complete}
\bibliographystyle{alpha}

\end{document}